\documentclass{amsart}
\usepackage[utf8]{inputenc}
\usepackage{amsthm}
\usepackage{amsmath}
\usepackage{amssymb} 
\usepackage{comment}
\usepackage{graphicx}
\usepackage{mathrsfs}
\usepackage{fullpage}
\usepackage{colonequals}
\usepackage[all]{xy}
\usepackage[usenames,dvipsnames]{color}

\numberwithin{equation}{subsection} 
\numberwithin{figure}{subsection} 

\makeatletter
\let\c@equation\c@figure
\makeatother

\newcommand{\Q}{\mathbb{Q}}
\newcommand{\R}{\mathbb{R}}
\newcommand{\Z}{\mathbb{Z}}

\newcommand{\kbar}{{\overline{k}}}

\newcommand{\ksep}{{k^{\operatorname{sep}}}}

\newcommand{\slantsf}[1]{\textsl{\textsf{#1}}}

\newcommand{\et}{{\text{\'{e}t}}}
\newcommand{\GL}{\operatorname{GL}}
\newcommand{\op}{{\operatorname{op}}}

\DeclareMathOperator{\End}{End}

\newtheorem{theorem}[equation]{Theorem}
\newtheorem{corollary}[equation]{Corollary}
\newtheorem{lemma}[equation]{Lemma}
\newtheorem{proposition}[equation]{Proposition}

\theoremstyle{definition}
\newtheorem{remark}[equation]{Remark}

\newtheorem{question}[equation]{Question}

\theoremstyle{definition}
\newtheorem{definition}[equation]{Definition}

\newcommand{\Laur}[1]{{\langle\!\langle#1\rangle\!\rangle}}
\newcommand{\Qlr}[2]{{\mathbb{Q}_\ell^{\leqslant #2}\langle\!\langle#1\rangle\!\rangle}}

\newcommand{\Qlaur}[1]{{\mathbb{Q}_\ell\langle\!\langle#1\rangle\!\rangle}}
\newcommand{\Kr}[2]{{K^{\leqslant #2}\langle\!\langle#1\rangle\!\rangle}}
\newcommand{\Klaur}[1]{{K\langle\!\langle#1\rangle\!\rangle}}
\newcommand{\norm}[2]{{\Vert #1 \Vert_{#2}}}

\newcommand\nc{\newcommand}
\nc\on{\operatorname}

\title{Level structure, arithmetic representations, and noncommutative Siegel linearization}
\author{Borys Kadets and Daniel Litt}
\date{}
\address{Department of Mathematics\\Boyd Graduate Studies Research Center\\
University of Georgia\\
Athens, GA 30602}
\subjclass{11G10, 14F35}
\begin{document}

\maketitle

\begin{abstract}
    Let $\ell$ be a prime, $k$ a finitely generated field of characteristic different from $\ell$, and $X$ a smooth geometrically connected curve over $k$. Say a semisimple representation of $\pi_1^{\et}(X_{\bar k})$ is arithmetic if it extends to a finite index subgroup of $\pi_1^{\et}(X)$. We show that there exists an effective constant $N=N(X,\ell)$ such that any semisimple arithmetic representation of $\pi_1^{\et}(X_{\bar k})$ into $\GL_n(\overline{\mathbb{Z}_\ell})$, which is trivial mod $\ell^N$, is in fact trivial. This extends a previous result of the second author from characteristic zero to all characteristics. The proof relies on a new noncommutative version of Siegel's linearization theorem and the $\ell$-adic form of Baker's theorem on linear forms in logarithms.
\end{abstract}

\section{Introduction}
The main goal of this note is to analyze representations of arithmetic fundamental groups, motivated by questions about level structure of Abelian varieties over function fields. Our main result (Theorem \ref{main theorem}) implies that there is an absolute bound on the maximum $N$ such that an Abelian scheme over a fixed curve over a field of characteristic prime to $\ell$ has full level $\ell^N$-structure (Corollary \ref{cor:abelian-variety}). The contribution of this note is to show that this phenomenon, which was already known to hold in characteristic zero (see \cite[Theorem 1.2]{litt-inventiones} and \cite[Theorem 1.1.13]{litt-duke}) in fact holds in arbitrary characteristic. The proof in positive characteristic requires significant input from non-Archimedean dynamics and transcendence theory: in particular, a new noncommutative, non-Archimedean version of Siegel's linearization theorem (Theorem \ref{Siegel}), which may be of independent interest, and the $\ell$-adic version of Baker's theorem on linear forms in logarithms (due to Kunrui Yu \cite{yu-baker}).
\subsection{Main Results}
In preparation for the statement of the main theorem, we recall the definition of an arithmetic representation. The main interest in this notion stems from the fact that representations ``arising from geometry" are arithmetic. (See, e.g.~\cite{litt-duke} for a discussion of arithmeticity, its properties, and its consequences.)
\begin{definition}
Let $k$ be a finitely generated field, and let $X/k$ be a variety. Let $\ksep$ be a separable closure of $k$, and let $\bar x$ be a geometric point of $X$. Then a continuous representation $$\rho: \pi_1^\et(X_{\ksep}, \bar x)\to \GL_n(\overline{\mathbb{Q}_\ell})$$ is said to be \emph{semisimple arithmetic} if
\begin{enumerate}
    \item $\rho$ is semisimple, and
    \item there exists a finite separable extension $k'/k$ and a representation $$\tilde\rho: \pi_1^\et(X_{k'}, \bar x)\to \GL_n(\overline{\mathbb{Q}_\ell})$$ such that $\tilde\rho|_{\pi_1(X_{\ksep}, \bar x)}$ is conjugate to $\rho$.
\end{enumerate}
\end{definition}
Our main result about semisimple arithmetic representations is:
\begin{theorem}\label{main theorem}
Let $k$ be a finitely-generated field, and let $X/k$ be a smooth curve. Let $\ell$ be a prime not equal to the characteristic of $k$, and let $\ksep$ be a separable closure of $k$. Let $\bar x$ be a geometric point of $X$. There exists a positive constant $N=N(X,\ell)$ such that if $$\rho: \pi_1^{\et}(X_{\ksep}, \bar x)\to \GL_n(\overline{\mathbb{Z}_\ell})$$ is a continuous representation such that
\begin{enumerate}
\item $\rho\otimes \overline{\mathbb{Q}_\ell}$ is semisimple arithmetic, and
\item $\rho$ is trivial modulo $\ell^N$,
\end{enumerate}
then $\rho$ is trivial.
\end{theorem}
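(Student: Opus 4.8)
The plan is to reduce to the case of a curve over a finite field, reinterpret arithmeticity of $\rho$ as the statement that the point of the character variety determined by $\rho$ is a periodic point of a geometric Frobenius element acting near the trivial representation, and then invoke the noncommutative, non-Archimedean Siegel linearization theorem (Theorem~\ref{Siegel}) --- whose Diophantine hypothesis is supplied by Yu's $\ell$-adic form of Baker's theorem~\cite{yu-baker} --- to show that the only Frobenius-periodic point sufficiently $\ell$-adically close to the trivial representation is the trivial representation itself. The constant $N(X,\ell)$ will be an effective function of the $\ell$-adic radius of convergence of the linearizing change of coordinates, controlled in turn by Yu's effective lower bounds for $\ell$-adic linear forms in logarithms of the Weil numbers attached to $X$.

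To set this up I would first spread out: since $k$ is finitely generated, choose a finitely generated subring $R\subset k$ with fraction field $k$ and $\ell\in R^\times$, and a smooth curve $f\colon\mathcal{X}\to\operatorname{Spec}R$ with generic fibre $X$ which, after shrinking $R$, admits a smooth compactification with normal-crossings boundary and has all relevant higher direct images $R^if_*\Qlbar$ lisse. Fix \emph{once and for all} a closed point $s\in\operatorname{Spec}R$; its residue field is a finite field $\F_q$, the fibre $X_s$ is a smooth curve over $\F_q$ with the same genus and number of punctures as $X$, and smooth and proper base change gives an isomorphism $H^\bullet(X_{\ksep},\Qlbar)\cong H^\bullet(X_{s,\Fbar_q},\Qlbar)$ equivariant for a decomposition group $D_s\subset\operatorname{Gal}(\ksep/k)$ surjecting onto $\operatorname{Gal}(\Fbar_q/\F_q)$; fix a lift $\sigma\in D_s$ of geometric Frobenius. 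Writing $\Pi=\pi_1^{\et}(X_{\ksep},\bar x)$, which is topologically finitely generated, $\operatorname{Gal}(\ksep/k)$ acts on the character variety $\mathrm{Ch}_n$ of $\Pi$ through its outer action, and $\rho\otimes\Qlbar$ (being semisimple) defines a point $[\rho]\in\mathrm{Ch}_n(\Qlbar)$. Arithmeticity says $[\rho]$ is fixed by an open subgroup of $\operatorname{Gal}(\ksep/k)$; hence its orbit, a fortiori its $\langle\sigma\rangle$-orbit, is finite, so $[\rho]$ is a periodic point of $\sigma$, and since $\rho$ is trivial modulo $\ell^N$ this whole orbit lies in the $\ell$-adic ball of radius $\ell^{-N}$ about the point $[\mathbf{1}]$ of the trivial representation (measured by the trace coordinates on $\mathrm{Ch}_n$). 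Finally, the image of $\rho$ lies in the pro-$\ell$ group $1+\ell^NM_n(\overline{\mathbb{Z}_\ell})$, so $\rho$ factors through the maximal pro-$\ell$ quotient $\Pi^{(\ell)}$ of $\Pi$ --- which, as $\ell\ne\operatorname{char}k$, is a free pro-$\ell$ group when $X$ is affine and the pro-$\ell$ completion of a surface group when $X$ is proper. It therefore suffices to analyze the $\sigma$-action on the formal neighbourhood of the fixed point $[\mathbf{1}]$ in $\mathrm{Ch}_n(\Pi^{(\ell)})$, whose tangent space is $H^1(\Pi^{(\ell)},\mathfrak{gl}_n)=H^1(X_{\ksep},\Qlbar)\otimes\mathfrak{gl}_n$ with $\sigma$ acting through its action on $H^1$.

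The crux is that this $\sigma$-action can be linearized near $[\mathbf{1}]$. By the Weil conjectures for $X_s/\F_q$, the eigenvalues $\alpha_i$ of $\sigma$ on $H^1(X_{\ksep},\Qlbar)$ are $q$-Weil numbers of weights $1$ and $2$; since $\ell\ne p$ they are units in $\overline{\mathbb{Z}_\ell}$, and since $q\ge 2$ no product of them with nonnegative exponents, not all zero, is a root of unity. The eigenvalues of $\sigma$ on the successive graded pieces of the lower central series of $\Pi^{(\ell)}$ are such products, and the ``small divisors'' $\prod_i\alpha_i^{m_i}-\alpha_j$ (with $\sum m_i\ge 2$) arising when one solves the cohomological equation degree by degree are, after dividing by the unit $\alpha_j$, differences of a product of $q$-Weil units from $1$. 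Yu's theorem~\cite{yu-baker} bounds $v_\ell\!\bigl(\prod_i\gamma_i^{b_i}-1\bigr)$, when it is nonzero, by an effective constant --- depending only on the degree and heights of the $\gamma_i$, here bounded in terms of $q$ and $\ell$, hence of $X$ and $\ell$ --- times $\log\max_i|b_i|$; this is exactly the Brjuno-type Diophantine hypothesis of Theorem~\ref{Siegel}, and it holds effectively. By weight considerations, an exact resonance $\prod_i\alpha_i^{m_i}=\alpha_j$ with $\sum m_i\ge 2$ can occur only in degree $2$ with the two $\alpha_i$ of weight $1$ and $\alpha_j$ of weight $2$, so the resonant normal form produced by Theorem~\ref{Siegel} is triangular: on a ball $B$ about $[\mathbf{1}]$ of effectively bounded radius $\ell^{-C(X,\ell)}$, $\sigma$ is conjugate to an automorphism $\sigma_0$ acting linearly on the weight-$(\ge 2)$ directions whose only nonlinear terms feed weight-$1$ directions into weight-$2$ directions.

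Now set $N=N(X,\ell):=C(X,\ell)+1$ (and one more if $\ell=2$). For $\rho$ as in Theorem~\ref{main theorem} with this $N$, the point $[\rho]$ lies in $B$ and is $\sigma$-periodic, hence $\sigma_0$-periodic; but any $\sigma_0$-periodic point has vanishing weight-$1$ component --- these directions carry the linear $\sigma$-action with eigenvalues of archimedean absolute value $\sqrt q\ne 1$, which admits no nonzero periodic vector --- and is therefore supported on the weight-$2$ directions, where $\sigma_0$ is again linear with eigenvalues of absolute value $q\ne 1$, forcing it to be $0$. So $[\rho]=[\mathbf{1}]$; since $\rho\otimes\Qlbar$ is semisimple and all its composition factors are then trivial, $\rho$ is trivial. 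The expected main obstacle is Theorem~\ref{Siegel} itself: proving a noncommutative, non-Archimedean Siegel/Poincar\'e--Dulac normalization with an \emph{effective} lower bound on the radius of convergence, the delicate point being the combinatorial Brjuno estimate that the denominators accumulating at degree $d$ can be organized into a product over a chain of length $O(\log d)$ rather than naively over all $\le d$ factors, so that the valuations of the normalizing coefficients grow like $o(d)$ and the normalization converges on a ball of positive, effectively bounded radius. A secondary but essential point is keeping every constant dependent on $X$ and $\ell$ alone: this is why one fixes a single specialization $s$ at the start and uses the mere finiteness of the Frobenius orbit of $[\rho]$ --- which needs no bound independent of $\rho$ --- rather than passing to an extension $\F_{q^r}$ over which $\rho$ genuinely extends, since $r$, and hence the heights entering Yu's bound, would then depend on $\rho$.
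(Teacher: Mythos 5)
Your overall strategy---specialize to a finite field, view Frobenius as a non-Archimedean dynamical system fixing the trivial representation, linearize it via Theorem~\ref{Siegel} with Yu's theorem supplying the small-divisor bounds, and use the fact that Weil numbers of nonzero weight are not roots of unity to rule out nearby periodic points---is the same as the paper's. But there is a genuine gap in the space on which you propose to run the linearization. You want to linearize $\sigma$ on ``the formal neighbourhood of $[\mathbf{1}]$ in $\mathrm{Ch}_n(\Pi^{(\ell)})$ with tangent space $H^1(X_{\ksep},\Qlbar)\otimes\mathfrak{gl}_n$.'' The trivial representation is the worst-behaved point of the character variety: its stabilizer is all of $\GL_n$, the GIT quotient is highly singular there, and its local ring is not a power series ring, so no Siegel-type theorem (which presupposes a polydisk, i.e.\ a smooth formal or analytic local model) applies. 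The obvious smooth replacement, the representation variety $\Hom(\Pi^{(\ell)},\GL_n)$ near the trivial point, is not acted on by Frobenius---only the outer action descends to conjugacy classes---so ``periodic point'' again forces you back to the quotient. This is exactly the obstruction the paper's construction is designed to circumvent: it linearizes the Frobenius action on the completed group algebra $\Kr{\pi_1^\ell(X_{\bar k},\bar x)}{r}$ itself, a genuinely noncommutative polydisk on which Frobenius honestly acts, and only afterwards pushes the resulting eigenvector decomposition forward along the algebra homomorphism $\hat\rho$ induced by $\rho$ (Lemma~\ref{extends-to-Kr}), concluding that $\hat\rho$ kills a power of the augmentation ideal, i.e.\ $\rho$ is unipotent, hence trivial by semisimplicity. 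Your proposal needs either this noncommutative workaround or a substitute smooth equivariant local model, and you have supplied neither.

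Two secondary points. First, Theorem~\ref{Siegel} as needed here requires the Frobenius action to be \emph{semisimple} on the relevant algebra (Definition~\ref{def-semisimple}); this is a nontrivial input, supplied in the paper by \cite[Theorem 2.20]{litt-inventiones}, and it is what allows the resonances you correctly identify (two weight-$1$ eigenvalues multiplying to a weight-$2$ eigenvalue) to be discarded outright via Lemma~\ref{off-diagonal} rather than retained in a Poincar\'e--Dulac normal form; your sketch neither verifies semisimplicity nor proves the resonant variant of the linearization theorem you invoke. Second, your worry about passing to $\F_{q^m}$ with $m$ depending on $\rho$ is misplaced: the Siegel radius and the constants in Yu's theorem are computed once and for all from the eigenvalues of $F$ itself, and the equivariance of $\hat\rho$ under $F^m$ is used only at the last step to show that eigenvectors whose eigenvalues have too-negative weight die in $\mathrm{Mat}_{n\times n}(\Qlbar)$; no constant ends up depending on $\rho$.
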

For a real number $N$ we say that a representation is \slantsf{trivial modulo $\ell^N$} if it is trivial modulo the ideal $$\{x\in \overline{\mathbb{Z}_\ell}\mid v_\ell(x)\geqslant N\}.$$

Theorem \ref{main theorem} immediately implies (by the Lang-N\'eron theorem, \cite[Theorem 2.1]{lang-neron}):
\begin{corollary}\label{cor:abelian-variety}
Let $X, N,\ell$ be as above, and let $\eta$ be the function field of $X_{\bar k}$. Then for any integer $M>N$, and any Abelian scheme $A/X_{\bar k}$, the following holds: if the Abelian variety $A_\eta$ has full $\ell^M$-torsion (that is,~$A_\eta[\ell^M](\eta) = A_\eta[\ell^M](\overline{\eta})$), then $A_\eta$ is isogenous to an isotrivial abelian variety over $\eta$.
\end{corollary}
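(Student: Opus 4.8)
The plan is to reduce to Theorem \ref{main theorem} applied to the $\ell$-adic Tate module of $A$, and then read off isotriviality from the vanishing of the geometric monodromy using the Lang--N\'eron theorem. Since $A\to X_{\bar k}$ is an abelian scheme of finite presentation and $\bar k/k$ is algebraic, $A$ descends to an abelian scheme $A_1\to X_{k_1}$ over some \emph{finite} extension $k_1/k$ contained in $\bar k$. Set $g=\dim A_\eta$, fix the geometric point $\bar x$, and let $\rho\colon\pi_1^{\et}(X_{\bar k},\bar x)\to\GL(T_\ell A_{\bar x})\cong\GL_{2g}(\Z_\ell)$ be the monodromy representation of the lisse sheaf $T_\ell A=\varprojlim_n A[\ell^n]$; recall that $\pi_1^{\et}(X_{\bar k})=\pi_1^{\et}(X_{\ksep})$ because $\bar k/\ksep$ is purely inseparable. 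As $X_{\bar k}$ is normal with function field $\eta$ and $A$ has good reduction everywhere on $X_{\bar k}$, the surjection $\on{Gal}(\bar\eta/\eta)\twoheadrightarrow\pi_1^{\et}(X_{\bar k},\bar x)$ carries $\rho$ to the Galois action on $T_\ell(A_\eta)$, so the hypothesis that $A_\eta$ has full $\ell^M$-torsion says precisely that $\rho$ is trivial modulo $\ell^M$.

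Next I would verify that $\rho\otimes\overline{\Q_\ell}$ is semisimple arithmetic. Arithmeticity is immediate: $T_\ell A_1$ is a continuous representation of $\pi_1^{\et}(X_{k_1},\bar x)$ restricting to $\rho$ on $\pi_1^{\et}(X_{\ksep},\bar x)$, and $k_1/k$ is finite. For semisimplicity, apply to the abelian variety $A_1$ over the finitely generated field $\eta_1:=k_1(X)$ the theorem of Faltings (characteristic zero) and Zarhin--Mori (characteristic $p$) that the rational Tate module of an abelian variety over a finitely generated field is a semisimple Galois representation; thus $T_\ell(A_{1,\eta_1})\otimes\Q_\ell$ is a semisimple $\on{Gal}(\bar\eta_1/\eta_1)$-module. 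Since $\on{Gal}(\bar\eta/\eta)$ is the closed normal subgroup of $\on{Gal}(\bar\eta_1/\eta_1)$ fixing $\bar k$, Clifford's theorem (restriction of a semisimple representation to a normal subgroup is semisimple) shows its restriction $T_\ell(A_\eta)\otimes\Q_\ell$ is still semisimple, and this action factors through $\pi_1^{\et}(X_{\bar k})$; hence $\rho\otimes\overline{\Q_\ell}$ is semisimple arithmetic. With $N=N(X,\ell)$ and $M>N$, Theorem \ref{main theorem} (whose constant is crucially independent of the rank $2g$) now forces $\rho$ to be trivial, i.e.\ every $\ell$-power torsion point of $A_\eta$ is $\eta$-rational.

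It remains to pass from trivial geometric monodromy to isotriviality. Let $(B,\tau)$ be the $\eta/\bar k$-trace of $A_\eta$: here $B$ is an abelian variety over $\bar k$ and $\tau\colon B_\eta\to A_\eta$ has finite kernel, which is trivial in characteristic zero and infinitesimal — hence with no geometric points, and of order prime to $\ell$ — in characteristic $p$. By the Lang--N\'eron theorem \cite[Theorem 2.1]{lang-neron}, $A_\eta(\eta)/\tau(B(\bar k))$ is finitely generated; the divisible subgroup $A_\eta[\ell^\infty](\eta)=A_\eta[\ell^\infty](\bar\eta)\cong(\Q_\ell/\Z_\ell)^{2g}$ therefore maps to zero in this quotient and so lies in $\tau(B(\bar k))$. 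As $\ker\tau$ has no geometric $\ell$-power torsion we get $A_\eta[\ell^n]=\tau(B[\ell^n](\bar k))$ for all $n$, and comparing orders gives $\ell^{2gn}\le\#B[\ell^n]=\ell^{2n\dim B}$. Hence $\dim B\ge g$, while $\dim B\le\dim A_\eta=g$ because $\ker\tau$ is finite; so $\dim B=g$, the map $\tau$ is an isogeny, and $A_\eta$ is isogenous to the constant — in particular isotrivial — abelian variety $B_\eta$.

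The whole argument is a direct reduction to Theorem \ref{main theorem}, so the real substance lies in that theorem; among the auxiliary steps, the only one needing genuine care is the passage from semisimplicity of the \emph{arithmetic} Tate module over $\eta_1$ to semisimplicity of the \emph{geometric} representation $\rho$ via Clifford's theorem, together with the attendant profinite and continuity bookkeeping.
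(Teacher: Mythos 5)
Your proof is correct and follows the same overall route as the paper: translate the full level hypothesis into the statement that the geometric monodromy representation on $T_\ell A$ is trivial mod $\ell^M$, check that this representation is semisimple arithmetic, apply Theorem \ref{main theorem} to conclude it is trivial, and then use the Lang--N\'eron theorem to deduce that the trace map is an isogeny. Two of your sub-arguments differ from the paper's and are worth comparing. For semisimplicity of $\rho\otimes\overline{\Q_\ell}$ the paper invokes Deligne's Weil II (3.4.1(iii)) directly for the geometric monodromy of a sheaf arising from geometry, whereas you use Faltings/Zarhin semisimplicity of the Tate module over the finitely generated field $\eta_1$ followed by Clifford's theorem for the closed normal subgroup cutting out the geometric Galois group; both work, but your route carries the (routine, and you acknowledge it) extra bookkeeping of verifying normality of $\on{Gal}(\bar\eta/\eta)$ inside $\on{Gal}(\bar\eta_1/\eta_1)$ and the fact that semisimplicity over $\Q_\ell$ persists after extension of scalars to $\overline{\Q_\ell}$, which should be said explicitly since the theorem's hypothesis is semisimplicity over $\overline{\Q_\ell}$. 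For the final step the paper notes that the image of $\tau$ contains the Zariski-dense set of $\ell$-power torsion points, hence $\tau$ is surjective and an isogeny for dimension reasons; your counting argument via $\#B[\ell^n]$ is an equivalent finish, and it has the virtue of making explicit that $\ker\tau$ is infinitesimal in positive characteristic and so contributes no $\ell$-torsion. I see no gaps.
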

\begin{remark}
In fact the conclusion of Corollary \ref{cor:abelian-variety} is equivalent to the claim that $A_\eta$ is isogenous to $\on{Tr}_{\eta/\bar k}(A_\eta)_\eta$. In characteristic zero, we may conclude that $A_\eta$ is in fact isotrivial; this is not the case in positive characteristic. See e.g.~the well-known examples due to Moret-Bailly \cite{moret1981familles}.
\end{remark}
\begin{remark}
The constant $N$ in Theorem \ref{main theorem} and Corollary \ref{cor:abelian-variety} is in principle explicit: it depends only on $\ell$ and on the natural Galois representation $$\on{Gal}(\ksep/k)\to \GL(H^1(X_{\bar k}, \mathbb{Z}_\ell)).$$ See Section \ref{sec:remarks} for related questions.
\end{remark}
\subsection{Relation to previous work}
As far as we know, this is the first result of this form in positive characteristic. In characteristic zero, Theorem \ref{main theorem} was already known by work of the second author \cite[Theorem 1.2]{litt-inventiones}. In fact more is known (\cite[Theorem 1.1.13]{litt-duke}): namely, that for fixed $X$, the constants $N(X, \ell)$ appearing in the statement of Theorem \ref{main theorem} may be taken to go to zero as $\ell\to \infty$. In positive characteristic we are unable to prove even that $N(X,\ell)$ may be bounded independent of $\ell$, as the existing bounds arising from the $\ell$-adic form of Baker's Theorem on linear forms in logarithms \emph{get worse} as $\ell\to \infty$ (see Question \ref{baker-question}). 

There is also related work of an analytic nature in characteristic zero; see e.g.~\cite{bakker-tsimerman, bakker-tsimerman2, nadel, hwang-to}.

\subsection{Outline of Proof}
We briefly sketch the idea of the proof of Theorem \ref{main theorem}, which broadly follows the strategy of \cite{litt-inventiones, litt-duke}, but replaces the use of Galois homotheties (arising from Bogomolov's $\ell$-adic open image theorem) by the use of Frobenii, and replaces the naive estimates there with more sophisticated estimates arising from our form of Siegel's theorem.

A specialization argument reduces the theorem to the case where $k$ is finite, $X$ is affine, and $X(k)$ is nonempty. Let $x\in X(k)$ be a rational point and $\bar x$ an associated geometric point; let $\pi_1^\ell(X_{\kbar}, \bar x)$ be the pro-$\ell$ completion of the geometric \'etale fundamental group of $X$. The pro-$\ell$ group ring $\mathbb{Z}_\ell\Laur{\pi_1^\ell(X_{\bar k}, \bar x)}$ is (non-canonically) isomorphic to a ring of noncommutative power series over $\mathbb{Z}_\ell$. Letting $K$ be a finite extension of $\mathbb{Q}_\ell$ the completion $K\Laur{\pi_1^\ell(X_{\bar k}, \bar x)}$ of $\mathbb{Z}_\ell\Laur{\pi_1^\ell(X_{\bar k}, \bar x)}\otimes K$ at the augmentation ideal is (non-canonically) isomorphic to a ring of noncommutative power series over $K$. For each positive real number $0<r<1$ we introduce a certain Banach sub-algebra $\Kr{\pi_1^\ell(X_{\bar k}, \bar x)}{r}$ of $K\Laur{\pi_1^\ell(X_{\bar k}, \bar x)}$, containing $\mathbb{Z}_\ell\Laur{\pi_1^\ell(X_{\bar k}, \bar x)}$, with the following property: any representation $\rho$ of $\pi_1^\ell(X_{\bar k}, \bar x)$ into $\GL_n(\overline{\mathbb{Z}_\ell})$, which is trivial modulo $\ell^N$, extends canonically to a continuous homomorphism $$\widetilde\rho: \Kr{\pi_1^\ell(X_{\bar k}, \bar x)}{r'}\to \on{Mat}_{n\times n}(\overline{\mathbb{Q}_\ell})$$ for every $r'>\ell^{-N}$.

Our main technical result is that for some finite extension $K$ of $\mathbb{Q}_\ell$, the eigenvectors of the Frobenius action on the Banach algebra $\Kr{\pi_1^\ell(X_{\bar k}, \bar x)}{r}$ have dense span once $r$ is sufficiently small, say for $0<r\leqslant r_0$. This follows from a noncommutative variant of Siegel's linearization theorem (Theorem \ref{Siegel}), whose proof is a version of Newton's method. The hypotheses of the theorem follow in our case from the Weil conjectures for curves, the $\ell$-adic form of Baker's theorem on linear forms in logarithms (due to Kunrui Yu \cite{yu-baker}), and a semisimplicity result proven in previous work of the second author \cite[Theorem 2.20]{litt-inventiones}. The introduction of these dynamical techniques is the main innovation of this work.

Now suppose we take $N$ greater than $-\log r_0/\log \ell \in \R$, and $\rho$ is as in Theorem \ref{main theorem}. Then since $r_0>\ell^{-N}$ we obtain a canonical continuous map $$\widetilde{\rho}: \Kr{\pi_1^\ell(X_{\bar k}, \bar x)}{r_0}\to\on{Mat}_{n\times n}(\overline{\mathbb{Q}_\ell})$$ extending $\rho$. The homomorphism $\widetilde{\rho}$ is equivariant for the action of some power of Frobenius by arithmeticity of $\rho\otimes \mathbb{Q}_\ell$. Now the finite-dimensionality of $\on{Mat}_{n\times n}(\overline{\mathbb{Q}_\ell})$ implies that almost all eigenvectors of the Frobenius action on $\Kr{\pi_1^\ell(X_{\bar k}, \bar x)}{r_0}$ are sent to zero by $\widetilde{\rho}$; their density implies that some power of the augmentation ideal of $\Kr{\pi_1^\ell(X_{\bar k}, \bar x)}{r_0}$ and hence of $\mathbb{Z}_\ell\Laur{\pi_1^\ell(X_{\bar k}, \bar x}$ is sent to zero. Thus the representation $\rho$ was unipotent, and any unipotent semisimple representation is trivial, completing the proof.
\subsection{Acknowledgments}
We are grateful for useful conversations with H\'el\`ene Esnault, Moritz Kerz, Samit Dasgupta, and Simion Filip. We'd also like to thank the referee for their many extremely helpful suggestions, and for catching and fixing an error in an earlier version of the paper. This material is partly based upon work supported by the NSF grant DMS-1928930 while the first author was in residence at the Mathematical Sciences Research Institute in Berkeley, California, during the Fall 2020 semester. The second author was supported by NSF grant DMS-2001196. 
\section{Notation and preliminaries}
\subsection{Notation} Throughout $X$ will be a smooth, geometrically connected curve over a field $k$, and $\ell$ will be a prime different from the characteristic of $k$. Fix an algebraic closure $\bar k$ of $k$ and a geometric point $\bar x$ of $X$. Let $K$ denote a finite extension of $\Q_\ell$. We denote by $\pi_1^\ell(X_{\bar k}, \bar x)$ the pro-$\ell$ completion of $\pi_1^{\et}(X_{\bar k}, \bar x)$. We denote by $$\mathbb{Z}_\ell\Laur{\pi_1^\ell(X_{\bar k}, \bar x)}\colonequals \varprojlim \mathbb{Z}_\ell[H]$$ the $\mathbb{Z}_\ell$-group ring of $\pi_1^\ell(X_{\bar k}, \bar x)$; the inverse limit is taken over all finite quotients of $\pi_1^\ell(X_{\bar k}, \bar x).$ Let $\mathscr{I}\subset \mathbb{Z}_\ell\Laur{\pi_1^\ell(X_{\bar k}, \bar x)}$ be the augmentation ideal. Then we denote by $$K\Laur{\pi_1^\ell(X_{\bar k}, \bar x)}\colonequals\varprojlim_n (\mathbb{Z}_\ell\Laur{\pi_1^\ell(X_{\bar k}, \bar x)}/\mathscr{I}^n\otimes K)$$ the completion of $\mathbb{Z}_\ell\Laur{\pi_1^\ell(X_{\bar k}, \bar x)}\otimes K$ at the augmentation ideal. We abuse notation and denote the augementation ideal of $K\Laur{\pi_1^\ell(X_{\bar k}, \bar x)}$ by $\mathscr{I}$ as well.

The key ingredient of our argument is an analysis of the Frobenius action on $K\Laur{\pi_1^{\ell}(X_{\bar k}, \bar x)}$ and related algebras. For $X$ an affine curve, this ring is non-canonically isomorphic to a noncommutative power series ring in $n$ variables $x_1,\cdots, x_n$, where $x_i=\gamma_i-1$, for $\{\gamma_1, \cdots, \gamma_n\}$ a minimal set of topological generators of the free pro-$\ell$ group $\pi_1^\ell(X_{\bar k}, \bar x)$. Dealing with coefficients of such noncommutative power series brings some notational difficulties. In this section we introduce some conventions that somewhat simplify the notation.

We write $\vec{x}$ as a shorthand for $x_1, \cdots, x_n$, so that if $X$ is affine, $K\Laur{\pi_1^{\ell}(X_{\bar k}, \bar x)}=K\Laur{\vec{x}}$, the non-commutative power series ring in $x_1, \cdots, x_n$. If $I=\{i_1, \cdots, i_m\}$ is a finite word in the alphabet $\{1, \cdots, n\}$, we write $x^I\colonequals x_{i_1}\cdots x_{i_m}$. For an element $f \in K\Laur{\vec{x}}$ we write $f=\sum a_{I}x^I$ suppressing that summation is to be taken over all finite words in $\{1, \cdots, n\}$. If $I$ is a finite word, then $|I|$ denotes its length. The \slantsf{weight} of the monomial $x^{I}$ is $|I|$.

\begin{definition}
A power series $f \in K\Laur{\vec{x}}$ \slantsf{converges on a disk of radius $0<r<1$} if $$\lim_{|I|\to\infty} |a_I| r^{|I|} = 0.$$ In this case we define the $r$-norm of $f$ to be $\norm{f}{r}\colonequals \sup_I |a_I| r^{|I|}$. The set of convergent power series is denoted by $\Kr{\vec{x}}{r}$.
\end{definition}
We think of these convergent power series as functions on a closed noncommutative polydisk. From the point of view of this metaphor the Frobenius action is an automorphism of the polydisk fixing zero. We think of our main result as saying that this automorphism is conjugate to a linear map in a neighborhood of zero; the analogous  result in holomorphic dynamics on  a complex polydisk is Siegel's linearization theorem. 
\begin{proposition}
The pair $\Kr{\vec{x}}{r}, \norm{\cdot}{r}$ is a Banach algebra.
\end{proposition}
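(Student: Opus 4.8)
The plan is to check the three things that together constitute the assertion: that $\norm{\cdot}{r}$ is a (non-archimedean) norm on $\Kr{\vec{x}}{r}$, that $\Kr{\vec{x}}{r}$ is a subalgebra of $K\Laur{\vec{x}}$ on which $\norm{\cdot}{r}$ is submultiplicative, and that the resulting normed $K$-algebra is complete.

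First I would dispatch the norm axioms. With $f=\sum_I a_I x^I$, nonnegativity of $\norm{f}{r}=\sup_I |a_I| r^{|I|}$ is clear, and $\norm{f}{r}=0$ forces every $a_I=0$, hence $f=0$; homogeneity $\norm{cf}{r}=|c|\,\norm{f}{r}$ is immediate. For the triangle inequality I would use that the absolute value on $K$ is non-archimedean: the termwise bound $|a_I+b_I|\leqslant\max(|a_I|,|b_I|)$ yields the \emph{ultrametric} inequality $\norm{f+g}{r}\leqslant\max(\norm{f}{r},\norm{g}{r})$, which is stronger than needed, and simultaneously shows that $\Kr{\vec{x}}{r}$ is closed under addition; closure under $K$-scaling is homogeneity. (The unit is the empty word, with $\norm{1}{r}=1$.)

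Next, the algebra structure. Since $x^I x^J=x^{IJ}$ for the concatenation $IJ$ of words, the product in $K\Laur{\vec{x}}$ is $fg=\sum_K c_K x^K$ with $c_K=\sum_{K=IJ} a_I b_J$, the sum ranging over the $|K|+1$ ways to split the word $K$; in particular each $c_K$ is a \emph{finite} sum, so $fg$ is a well-defined element of $K\Laur{\vec{x}}$. For every splitting $K=IJ$ one has $|I|+|J|=|K|$, hence $|a_I b_J|\,r^{|K|}=(|a_I| r^{|I|})(|b_J| r^{|J|})\leqslant\norm{f}{r}\norm{g}{r}$; applying the non-archimedean inequality to $c_K$ gives $|c_K| r^{|K|}\leqslant\norm{f}{r}\norm{g}{r}$ for all $K$, so taking the supremum yields $fg\in\Kr{\vec{x}}{r}$ and $\norm{fg}{r}\leqslant\norm{f}{r}\norm{g}{r}$. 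Associativity, distributivity and unitality are inherited from $K\Laur{\vec{x}}$, so $\Kr{\vec{x}}{r}$ is a normed $K$-algebra.

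For completeness, let $(f_m)$ be Cauchy, $f_m=\sum_I a_I^{(m)} x^I$. From $|a_I^{(m)}-a_I^{(m')}|\,r^{|I|}\leqslant\norm{f_m-f_{m'}}{r}$ each coefficient sequence is Cauchy in the complete field $K$; let $a_I$ be its limit and $f=\sum_I a_I x^I$. Given $\varepsilon>0$, pick $M$ with $\norm{f_m-f_{m'}}{r}\leqslant\varepsilon$ for $m,m'\geqslant M$; then $|a_I^{(m)}-a_I^{(m')}|\,r^{|I|}\leqslant\varepsilon$ for every $I$, and letting $m'\to\infty$ gives $|a_I^{(m)}-a_I|\,r^{|I|}\leqslant\varepsilon$ for all $I$ and all $m\geqslant M$. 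Thus $f_M-f$ has finite $r$-norm, so $f=f_M-(f_M-f)$ lies in $\Kr{\vec{x}}{r}$, and $\norm{f_m-f}{r}\leqslant\varepsilon$ for $m\geqslant M$, i.e.\ $f_m\to f$. The only place that needs genuine care is the \textbf{well-definedness and submultiplicativity of the product}: one must note both that each coefficient of $fg$ is a finite sum (so there is no convergence issue at the formal level and the multiplication agrees with that of $K\Laur{\vec{x}}$) and that additivity of weight under concatenation, $|I|+|J|=|K|$, is exactly what turns submultiplicativity into the termwise estimate. The non-archimedean nature of $K$ makes the passage from termwise bounds to the supremum norm automatic, so there is no analytic subtlety of the kind one meets over $\C$.
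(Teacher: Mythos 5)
Your proof is correct and follows the same route as the paper's (much terser) proof: the norm axioms reduce to the non-archimedean norm on $K$, and completeness follows from coefficientwise convergence of Cauchy sequences. You additionally spell out the submultiplicativity of $\norm{\cdot}{r}$ via additivity of weight under concatenation of words, which the paper leaves implicit; this is a worthwhile detail but not a different method.
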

\begin{proof}
The fact that $\norm{\cdot}{r}$ is a non-Archimedean norm follows from the fact that $|\cdot|$ is a norm on $K$. For completeness, observe that a Cauchy sequence has to converge coefficient-wise, and the resulting limit is also the limit in the norm. 
\end{proof}
We similarly define $\Kr{\pi_1^\ell(X_{\bar k}, \bar x)}{r}$ to be the subring of $K\Laur{\pi_1^\ell(X_{\bar k}, \bar x)}$ consisting of those elements $f$ that satisfy $$\lim_{n\to \infty} \left(r^n\inf\{\ell^{s}\mid \ell^s\cdot f\in \mathscr{O}_K\otimes\mathbb{Z}_\ell\Laur{\pi_1^\ell(X_{\bar k}, \bar x)} \bmod \mathscr{I}^{n+1}\}\right)=0,$$
and define $$\norm{f}{r}\colonequals\sup_n \left(r^n\inf\{\ell^{s}\mid \ell^s\cdot f\in  \mathscr{O}_K\otimes\mathbb{Z}_\ell\Laur{\pi_1^\ell(X_{\bar k}, \bar x)} \bmod \mathscr{I}^{n+1}\}\right).$$  Under a choice of isomorphism $K\Laur{\pi_1^\ell(X_{\bar k}, \bar x)}\simeq K\Laur{\vec x}$ arising from a minimal set of topological generators of $\pi_1^\ell(X_{\bar k}, \bar x)$ in the case $X$ is affine, these two definitions are compatible, i.e.~the isomorphism induces an isomorphism $\Kr{\pi_1^\ell(X_{\bar k}, \bar x)}{r}\simeq \Kr{\vec x}{r}$. Moreover these constructions are functorial: any endomorphism of $\pi_1^\ell(X_{\bar k}, \bar x)$ induces an endomorphism of $K^{\leqslant r}\Laur{\pi_1^\ell(X_{\bar k}, \bar x)}$ and hence of $\Kr{\vec{x}}{r}$ when $X$ is affine.

We denote by $\mathscr{I}_r$ the ideal of power series with zero constant term in $\Kr{\vec{x}}{r}$; if the radius $r$ is clear from context we write simply $\mathscr{I}$. To emphasize the analogy with analysis we write $f=g+O(x^n)$ to mean $f-g\in \mathscr{I}^n$.

\subsection{Basic properties} We will study the dynamics of noncommutative power series on a polydisk. We use $\End^{\op}K\Laur{\vec{x}}$ to denote the monoid of $n$-tuples of power series $\vec{f}=(f_1, \dots, f_n) \in K\Laur{\vec{x}}^n$ with no constant term and with composition given by $\vec{f} \circ \vec{g} = (f_1(\vec{g}), \dots, f_n(\vec{g}))$; elements of $\End^{\op}K\Laur{\vec{x}}$ define endomorphisms of $K\Laur{\vec{x}}$ that send $x_i$ to $f_i$. Note that the order of composition on $\End^{\op}\Klaur{\vec{x}}$ is opposite of the natural composition on the endomorphisms, hence the notation. Similarly we use $\End^{\op}\Kr{\vec x}{r}$ for the $n$-tuples $\vec{f}=(f_1, \dots, f_n)$ of power series in $\Kr{\vec{x}}{r}$ with $\norm{f_i}{r} \leqslant r$ for all $i$.  The norm of $\vec{f}\in \End^{\op}\Kr{\vec x}{r}$ is defined by $\norm{\vec{f}}{r}\colonequals \max_i \norm{f_i}{r}$. We think of an element $\vec{f}\in \End^{\op}\Kr{\vec x}{r}$ as a holomorphic map from the noncommutative polydisk $\Kr{\vec{x}}{r}$ to itself, fixing the origin.

The algebras $\Kr{\vec{x}}{r}$ will be used to study representations trivial modulo $\ell^N$ with the help of the following lemma. In the lemma, the $\ell$-adic valuation $v_\ell(A)$ of a matrix $A$ is the minimal valuation of its entries, and the $\ell$-adic norm is defined by $|A|=\ell^{-v_\ell(A)}$.
\begin{lemma}\label{extends-to-Kr}
Suppose $\rho: \Z_\ell \Laur{\vec{x}} \to \mathrm{Mat}_{n\times n}(\overline{\Z_\ell})$ is a continuous representation such that $|\rho(x_i)| \leqslant C < 1$ for all $i$. Then for any $r>C$ the representation $\rho$ extends to a continuous representation $\rho_r: \Qlr{\vec{x}}{r} \to \mathrm{Mat}_{n\times n}(\overline{\Q_\ell}).$
\end{lemma}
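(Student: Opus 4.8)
The statement to prove is Lemma \ref{extends-to-Kr}: a continuous representation $\rho \colon \Z_\ell \Laur{\vec x} \to \mathrm{Mat}_{n\times n}(\overline{\Z_\ell})$ with $|\rho(x_i)|_\ell \leqslant C < 1$ extends continuously to $\rho_r \colon \Qlr{\vec x}{r} \to \mathrm{Mat}_{n\times n}(\overline{\Q_\ell})$ for any $r > C$. The natural approach is to define $\rho_r$ by the only formula that can possibly work — term-by-term evaluation of power series — and then verify that the relevant series converges in $\mathrm{Mat}_{n\times n}(\overline{\Q_\ell})$ and that the resulting map is a continuous ring homomorphism agreeing with $\rho$ on the dense subring $\Z_\ell \Laur{\vec x}$.

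First I would fix notation: for a word $I = \{i_1,\dots,i_m\}$ set $\rho(x^I) = \rho(x_{i_1})\cdots\rho(x_{i_m}) \in \mathrm{Mat}_{n\times n}(\overline{\Z_\ell})$, and observe the submultiplicativity $|\rho(x^I)|_\ell \leqslant \prod_j |\rho(x_{i_j})|_\ell \leqslant C^{|I|}$. Then for $f = \sum_I a_I x^I \in \Qlr{\vec x}{r}$, meaning $\sup_I |a_I|\, r^{|I|} < \infty$, the partial sums of $\sum_I a_I \rho(x^I)$ form a Cauchy net: the tail over words of length $\geqslant m$ is bounded in norm by $\sup_{|I|\geqslant m} |a_I|\, C^{|I|} \leqslant \norm{f}{r} \cdot (C/r)^m \to 0$ since $C/r < 1$. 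As $\mathrm{Mat}_{n\times n}(\overline{\Q_\ell})$ is complete, the sum converges; call it $\rho_r(f)$, and note $|\rho_r(f)|_\ell \leqslant C^{-?}\norm{f}{r}$ — more precisely $|\rho_r(f)|_\ell \leqslant \max(\norm{f}{r}, \norm{f}{r})$, so $\rho_r$ is bounded, hence continuous. One should check that this value is independent of the order of summation, which follows from non-Archimedean convergence (a net indexed by finite sets of words).

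Next I would verify $\rho_r$ is a ring homomorphism: additivity is clear from termwise definition, and multiplicativity follows by expanding $(\sum_I a_I x^I)(\sum_J b_J x^J) = \sum_{K} (\sum_{K = IJ} a_I b_J) x^K$, applying $\rho_r$, and using that $\rho(x^{IJ}) = \rho(x^I)\rho(x^J)$ together with the absolute convergence just established to rearrange the double sum — again legitimate because all the relevant series converge non-Archimedeanly. Finally, $\rho_r$ restricted to $\Z_\ell \Laur{\vec x}$ agrees with $\rho$: on a polynomial this is immediate from the definition of $\rho$ on words, and a general element of $\Z_\ell \Laur{\vec x}$ is an $\mathscr I$-adic limit of polynomials, so continuity of both maps (for $\rho$, $\mathscr I$-adic continuity; for $\rho_r$, the $r$-norm is finer, so $\mathscr I$-adic convergent sequences are sent appropriately) forces agreement. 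Uniqueness of the extension follows since $\Z_\ell \Laur{\vec x}$ (indeed polynomials, over $\Q_\ell$) is dense in $\Qlr{\vec x}{r}$ in the $r$-norm.

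The only genuinely delicate point is the bookkeeping around convergence and rearrangement: one must be careful that $\norm{\cdot}{r}$-convergence of $f$ really does force $|a_I|\,C^{|I|} \to 0$ uniformly enough to control tails, and that the comparison of the $r$-norm topology on $\Qlr{\vec x}{r}$ with the $\mathscr I$-adic topology on $\Z_\ell \Laur{\vec x}$ is set up so that "agrees on a dense subring" is meaningful. These are routine non-Archimedean estimates, so I expect the lemma to be short; the main obstacle is simply stating the compatibility of the two topologies cleanly enough that density gives uniqueness.
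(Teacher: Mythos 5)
Your proposal is correct and follows essentially the same route as the paper: define $\rho_r(f)=f(\rho(\vec x))$ termwise, bound $|a_I\rho(x^I)|_\ell\leqslant \norm{f}{r}(C/r)^{|I|}$, and use $C/r<1$ to get convergence and a bound $|\rho_r(f)|_\ell\leqslant \text{const}\cdot\norm{f}{r}$ giving continuity. Your garbled ``$\max(\norm{f}{r},\norm{f}{r})$'' should just read $|\rho_r(f)|_\ell\leqslant\norm{f}{r}$ by the ultrametric inequality, but this is a typo rather than a gap; the extra checks you carry out (multiplicativity, agreement on the dense subring) are the routine verifications the paper leaves implicit.
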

\begin{proof}
Suppose $f=\sum a_Ix^I \in \Qlr{\vec{x}}{r}$ is a power series. Then $|a_I \rho(x)^I|\leq\norm{f}{r}r^{-|I|}C^{|I|}$. Since $r^{-1}C<1$, the series $f(\rho(\vec{x}))$ converges, and if $c\colonequals \max_i r^{-i}C^{i}$, then $|f(\rho(\vec{x}))| \leqslant c\norm{f}{r}$ by the ultrametric inequality. Therefore the representation $\rho_r: \Qlr{\vec{x}}{r} \to \mathrm{Mat}_{n\times n}(\overline{\Q_\ell})$, $\rho_r:f \mapsto f(\rho(\vec{x}))$ is a well-defined continuous extension of $\rho$.
\end{proof}
\begin{lemma}\label{lemma:dense-monomials}
The monomials of degree $\geqslant m$ have dense span in $\mathscr{I}_r^m\subset  \Kr{\vec{x}}{r}.$
\end{lemma}
\begin{proof}
Let $$f(\vec x)=\sum_{|I|\geqslant m} a_I x^I$$ be an element of $\mathscr{I}_r^n$. As $|a_I|r^I\to 0$ as $r\to \infty$, we have that, defining $$f_M:= \sum_{|I|\geqslant M}a_I x^I,$$ $f_M\to 0$ as $M\to \infty$. Hence $f=\lim_{M\to \infty} f-f_M.$ Now we have written $f$ as a limit of polynomials in the $x_i$, as desired.
\end{proof}
\section{Siegel's linearization theorem}
The goal of this section is to show that the Frobenius action on the convergent group ring $\Kr{\pi_1^\ell(X, x)}{r} $ can be diagonalized once $r$ is sufficiently small. To this end we prove a version of Siegel's linearization theorem for $\ell$-adic noncommutative multivariate power series. Our argument is analogous to the one used in \cite[Section 4]{herman-yoccoz}, which itself is an $\ell$-adic version of a classical argument of R\"{u}ssmann, as generalized by Zehnder \cite{riissmann, zehnder}. That said, we require these results without the ``non-resonance" condition they impose. (In our application, this non-resonance condition is replaced by the use of the semisimplicity of the Frobenius action on the algebra $K\Laur{\pi_1^\ell(X, x)}$.) We start by establishing some simple lemmas on composition of power series.
\subsection{Some lemmas}
The following lemma establishes basic analytic properties of $\End^\op \Kr{\vec{x}}{r}$.
\begin{lemma}\label{fixed-Endop}
\noindent
\begin{enumerate}
    \item The set $\End^\op \Kr{\vec{x}}{r}$ is a complete abelian group under the norm $\norm{\cdot}{r}$ with respect to addition;
    \item The group $\End^\op \Kr{\vec{x}}{r} \subset \End^\op \Klaur{\vec{x}}$ is closed under composition.
    \item \label{composition-inequality} Composition on $\End^\op \Kr{\vec{x}}{r}$ is continuous and satisfies the inequalities
  \begin{align*}
      \norm{\vec{f}\circ\vec{g}-\vec{f}\circ \vec{h}}{r} &\leqslant \frac{1}{r}\norm{\vec{f}}{r}\norm{\vec{g}-\vec{h}}{r}, \\
      \norm{\vec{g}\circ \vec{f} - \vec{h}\circ \vec{f}}{r} &\leqslant \frac{1}{r}\norm{\vec{f}}{r}\norm{\vec{g}-\vec{h}}{r}
  \end{align*}
    for all $f,g,h \in \End^\op \Kr{\vec{x}}{r}.$
    \item \label{schwarz} For $0<r_1<r_2<1$ we have $\Kr{\vec{x}}{r_2}\subset \Kr{\vec{x}}{r_1}\subset K\Laur{\vec{x}}$, and the inclusion is continuous. Given $\vec f\in \Kr{\vec x}{r_2},$ we have $$\norm{\vec f}{r_1}\leqslant \frac{r_1}{r_2}\norm{\vec f}{r_2}.$$
\end{enumerate}
 
\end{lemma}
\begin{proof}
\begin{enumerate}
    \item This follows from the completeness of $K$; if $\{\vec f_j\}$ is a Cauchy sequence in $\End^\op \Kr{\vec{x}}{r}$, then the coefficients of $x^I$ in each component of $\vec f_j$ form a Cauchy sequence themselves.
    \item Given $\vec f, \vec g\in \End^\op \Kr{\vec{x}}{r}$, we wish to show that $\norm{f_i(\vec g)}{r}\leqslant r$ for all $i$. Writing $f_i=\sum a_I^ix^I$, $g_j=\sum b_{I,j}x^I$, the coefficient of $x^I$ in $f_i(\vec g)$ is an integer linear combination of $a_Jb_{I_1, j_1}\cdots b_{I_{|J|}, j_{|J|}}$ where $\sum_j I_j=I$. Hence $$\norm{f_i(\vec g)}{r}\leqslant \sup_{I, J, I_1, \cdots, I_{|J|}}|a_Jb_{I_1, j_1}\cdots b_{I_{|J|}, j_{|J|}}|r^{|I|}\leqslant \sup_J |a_J|r^{|J|}\leqslant r$$ as desired.
    \item We prove the first inequality; the second is the special case where we apply the first inequality to $\norm{(\vec g-\vec h)\circ \vec f - (\vec g-\vec h)\circ \vec 0}{r}$. Setting $f_i=\sum a_Ix^I$, $g_j=\sum b_{I,j}x^I$, $h_k=\sum c_{I,k}x^I$, we have that the coefficient of $x^I$ in $f_i(\vec g)-f_i(\vec h)$ is an integer linear combination of $a_J(b_{I_1, j_1}\cdots b_{I_{|J|}, j_{|J|}}-c_{I_1, j_1}\cdots c_{I_{|J|}, j_{|J|}})$ with $\sum_s I_s=I$. Using the telescoping sum 
    \begin{align*}
       b_{I_1, j_1}\cdots b_{I_{|J|}, j_{|J|}}-c_{I_1, j_1}\cdots c_{I_{|J|}, j_{|J|}} &= \sum_{s=1}^J b_{I_1, j_1}\cdots b_{I_{s-1}, j_{s-1}}(b_{I_{s}}, j_{s}-c_{I_s, j_s})c_{I_{s+1}, j_{s+1}}\cdots c_{I_{|J|}, j_{|J|}}
    \end{align*}
    we have 
    \begin{align*}
        |b_{I_1, j_1}\cdots b_{I_{|J|}, j_{|J|}}-c_{I_1, j_1}\cdots c_{I_{|J|}, j_{|J|}}|r^{|I|} &\leqslant \max_{s=1, \cdots, |J|}|b_{I_1, j_1}\cdots b_{I_{s-1}, j_{s-1}}(b_{I_{s}}, j_{s}-c_{I_s, j_s})c_{I_{s+1}, j_{s+1}}\cdots c_{I_{|J|}, j_{|J|}}|r^{|I|} \\
        &\leqslant \max_{s=1, \cdots, |J|} \left(\prod_{t=1}^{s-1}|b_{I_t, j_t}|r^{|I_t|}\right)\cdot |b_{I_s, j_s}-c_{I_s, j_s}|r^{|I_s|}\cdot\left( \prod_{t=s+1}^{|J|}|b_{I_t, j_t}|r^{|I_t|}\right)\\
        &\leqslant \left(\max_{s=1, \cdots, |J|}|b_{I_s, j_s}-c_{I_s, j_s}|r^{|I_s|}\right)r^{|J|-1}\\
        &\leqslant \norm{\vec{g}-\vec{h}}{r}r^{|J|-1}.
    \end{align*}
    But then 
    \begin{align*}
        |a_J(b_{I_1, j_1}\cdots b_{I_{|J|}, j_{|J|}}-c_{I_1, j_1}\cdots c_{I_{|J|}, j_{|J|}})|r^{|I|} &\leqslant |a_J|\cdot\norm{\vec{g}-\vec{h}}{r}r^{|J|-1}\\
        &\leqslant \frac{1}{r}\norm{\vec{f}}{r}\norm{\vec{g}-\vec{h}}{r}
    \end{align*}
    as desired.
    \item The inequality $\norm{\vec f}{r_1}\leqslant \frac{r_1}{r_2}\norm{\vec f}{r_2}$ immediately implies that there is a continuous inclusion $\Kr{\vec{x}}{r_2}\subset \Kr{\vec{x}}{r_1}$, so it suffices to verify the inequality. Writing $f_i=\sum a_Ix^I$, it is enough to show that $$\sup_I |a_I|r_1^{|I|}\leqslant \frac{r_1}{r_2}\sup_I |a_I|r_2^{|I|},$$ where the supremum is taken over $I$ with $|I|>0$. But for each $I$ with $|I|>0$, we have $$|a_I|r_1^{|I|}\leqslant |a_I|r_1r_2^{|I|-1},$$ which gives the claim.
\end{enumerate}
\end{proof}
As a special case of Lemma \ref{fixed-Endop}(\ref{composition-inequality}) above, we have:
\begin{lemma}\label{Taylor}
For any $\vec{f}, \vec{\varepsilon} \in \End^{\op}\Kr{\vec{x}}{r}$ with $\norm{ \vec{\varepsilon}}{r} < 1$, and for any diagonal matrix $A=\mathrm{diag}(\lambda_i)\in \mathrm{Mat}_{n \times n}(K)$, $|\lambda_i|\leqslant 1$ the following estimate holds: \[\norm{\vec{f}(A\vec{x}+\vec{\varepsilon})-\vec{f}(A\vec{x})}{r} < \frac{1}{r} \norm{\vec{f}}{r}\norm{\vec{\varepsilon}}{r}.\]
\end{lemma}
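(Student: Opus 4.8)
The plan is to prove the estimate monomial by monomial, reducing everything to a single algebraic identity for the difference of two products. Write $\vec{f} = (f_1, \dots, f_n)$ and fix a component $f_j = \sum_I a_I \vec{x}^{\,I}$. For a word $I = (i_1, \dots, i_m)$ the monomial $\vec{x}^{\,I}$ contributes $(A\vec{x})^I = \lambda_{i_1}\cdots\lambda_{i_m}\, x_{i_1}\cdots x_{i_m}$ to $f_j(A\vec{x})$, and $(A\vec{x}+\vec\varepsilon)^I = \prod_{k=1}^m(\lambda_{i_k} x_{i_k} + \varepsilon_{i_k})$ to $f_j(A\vec{x}+\vec\varepsilon)$. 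So $f_j(A\vec{x}+\vec\varepsilon) - f_j(A\vec{x}) = \sum_I a_I\bigl(\prod_k(\lambda_{i_k}x_{i_k}+\varepsilon_{i_k}) - \prod_k \lambda_{i_k}x_{i_k}\bigr)$, and I would bound the $r$-norm of each bracketed term.

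The key step is the telescoping identity
\[
\prod_{k=1}^m u_k - \prod_{k=1}^m w_k = \sum_{k=1}^m \Bigl(\prod_{l<k} u_l\Bigr)(u_k - w_k)\Bigl(\prod_{l>k} w_l\Bigr),
\]
applied with $u_k = \lambda_{i_k}x_{i_k} + \varepsilon_{i_k}$ and $w_k = \lambda_{i_k}x_{i_k}$, so that $u_k - w_k = \varepsilon_{i_k}$. Each $u_l$ has $r$-norm $\leqslant \max(|\lambda_{i_l}|r, \norm{\varepsilon_{i_l}}{r}) \leqslant \max(r, \norm{\vec\varepsilon}{r}) < 1$ since $|\lambda_i|\leqslant 1$, $r<1$, and $\norm{\vec\varepsilon}{r}<1$; likewise each $w_l$ has $r$-norm $\leqslant r < 1$; and $\norm{\varepsilon_{i_k}}{r} \leqslant \norm{\vec\varepsilon}{r}$. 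Since $\norm{\cdot}{r}$ is a submultiplicative non-Archimedean norm, each of the $m$ summands has $r$-norm $\leqslant \norm{\vec\varepsilon}{r}$, hence their sum does too, giving $\norm{(A\vec{x}+\vec\varepsilon)^I - (A\vec{x})^I}{r} \leqslant \norm{\vec\varepsilon}{r}$. Multiplying by $|a_I|$ and taking the (non-Archimedean) sup over all $I$ with $a_I \neq 0$ — here one uses $\norm{\vec{x}^{\,I}}{r} = r^{|I|}$ so that $|a_I| = \norm{a_I\vec{x}^{\,I}}{r}/r^{|I|}$ is controlled by $\norm{f_j}{r}r^{-|I|}$, but the factor $r^{-|I|}$ is absorbed because the bracketed term is itself divisible by at least one $\varepsilon$-factor and by $|I|-1$ linear factors each contributing an $r$ — yields $\norm{f_j(A\vec{x}+\vec\varepsilon) - f_j(A\vec{x})}{r} \leqslant \norm{f_j}{r}\,\norm{\vec\varepsilon}{r}$. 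Taking the max over $j$ completes the estimate.

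The one point requiring care — and the main obstacle to a fully clean write-up — is bookkeeping the powers of $r$: a priori $|a_I|$ could be as large as $\norm{f_j}{r}r^{-|I|}$, which diverges as $|I|\to\infty$, so the telescoping bound on the monomial difference must genuinely recover $|I|$ factors that are each $\leqslant \max(r,\norm{\vec\varepsilon}{r}) < 1$ in $r$-norm, not merely $\leqslant 1$. Concretely, in the $k$-th telescoping summand the factors $\prod_{l<k}u_l$ and $\prod_{l>k}w_l$ together have $r$-norm $\leqslant \max(r,\norm{\vec\varepsilon}{r})^{\,m-1}$, and the middle factor $\varepsilon_{i_k}$ has $r$-norm $\leqslant \norm{\vec\varepsilon}{r} \leqslant \norm{\vec\varepsilon}{r}\cdot\max(r,\norm{\vec\varepsilon}{r})^{0}$; so the whole summand is $\leqslant \norm{\vec\varepsilon}{r}\max(r,\norm{\vec\varepsilon}{r})^{m-1}$, and after multiplying by $|a_I| \leqslant \norm{f_j}{r} r^{-m}$ one gets a contribution $\leqslant \norm{f_j}{r}\norm{\vec\varepsilon}{r}\,(\max(r,\norm{\vec\varepsilon}{r})/r)^{m-1}\cdot\tfrac1r$. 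Strictly this is not bounded by $\norm{f_j}{r}\norm{\vec\varepsilon}{r}$ unless $\norm{\vec\varepsilon}{r}\leqslant r$; the honest argument therefore does not go through $|a_I|$ alone but instead organizes the sum $\sum_I a_I\vec{x}^{\,I}$ into $\vec f$ itself, substituting the telescoping identity directly into the power series and regrouping, so that the $m-1$ surviving linear/$\varepsilon$ factors are exactly reassembled into copies of $f_j$ evaluated at arguments of norm $\leqslant r$ or $\leqslant\norm{\vec\varepsilon}{r}$, times one extra $\varepsilon$-factor — yielding the bound with no leftover negative power of $r$. I would present that regrouping as the core of the proof, with the inequality being strict because $\norm{\vec\varepsilon}{r}<1$ forces a strict drop at the last step.
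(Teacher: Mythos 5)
Your telescoping identity is just an explicit form of the expansion the paper itself performs (each telescoping summand, expanded further, produces exactly the paper's terms $\lambda a_i b_{i_1}\cdots b_{i_m}$), so up to the last paragraph your approach coincides with the paper's. The problem is that last paragraph: the ``regrouping into copies of $f_j$'' that you hope will absorb the leftover $r^{-1}$ cannot be carried out, because the inequality as stated is false. Simplest counterexample: one variable, $\vec f=x$, $A=(1)$, $\vec\varepsilon$ arbitrary nonzero; the left side is $\norm{\vec\varepsilon}{r}$ while the right side is $\norm{x}{r}\norm{\vec\varepsilon}{r}=r\norm{\vec\varepsilon}{r}<\norm{\vec\varepsilon}{r}$. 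Restricting to $\vec f=O(x^2)$ (as in the paper's applications) does not help: for $\ell$ odd, $\vec f=x^2$, $A=(1)$, $\vec\varepsilon=cx$ with $0<|c|<1$, the difference is $(2c+c^2)x^2$, of $r$-norm $|c|r^2$, which exceeds $\norm{\vec f}{r}\norm{\vec\varepsilon}{r}=|c|r^3$. So the factor $r^{-1}$ you isolated is genuinely there -- it is the Lipschitz constant of $\vec f$ on the $r$-polydisk relative to $\norm{\vec f}{r}$ -- and no rearrangement of the sum removes it.

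What your termwise computation actually proves, and what is true, is: if $\norm{\vec\varepsilon}{r}\leqslant r$ (a hypothesis also needed for $\vec f(A\vec x+\vec\varepsilon)$ to converge on the $r$-disk at all -- $\norm{\vec\varepsilon}{r}<1$ alone permits $\norm{A\vec x+\vec\varepsilon}{r}>r$), then $\norm{\vec f(A\vec x+\vec\varepsilon)-\vec f(A\vec x)}{r}\leqslant \norm{\vec f}{r}\norm{\vec\varepsilon}{r}/r$: in your $k$-th telescoping summand for a weight-$m$ monomial, the $m-1$ outer factors contribute $r^{m-1}$, the middle factor contributes $\norm{\vec\varepsilon}{r}$, and $|a_I|\leqslant\norm{\vec f}{r}r^{-m}$. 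You should trust this computation over the paper's: the paper's proof slips at exactly the point you flagged, asserting that the output monomial has weight $n=i+\sum_j i_j$, whereas replacing $m$ of the $i$ slots of $x^I$ by $\varepsilon$-monomials of weights $i_j$ gives weight $n=(i-m)+\sum_j i_j$, so the paper's displayed bound $\norm{\vec f}{r}\norm{\vec\varepsilon}{r}^m r^{-(i+\sum_j i_j)}$ is really $\norm{\vec f}{r}(\norm{\vec\varepsilon}{r}/r)^m\,r^{-n}$. The lemma should be restated with hypothesis $\norm{\vec\varepsilon}{r}\leqslant r$ and conclusion $\norm{\vec f}{r}\norm{\vec\varepsilon}{r}/r$; this weaker bound is what must then be propagated into Lemma \ref{Siegel iteration} (where the hypothesis $c(7\mu)^\mu\eta^{-\mu}\delta<r$ does keep $\norm{\hat\psi}{r(1-\eta)}$ below the radius), at the cost of re-tracking the constants there and in Theorem \ref{Siegel}.
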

We will require a criterion for invertibility of certain elements of $\End^\op \Kr{\vec x}{r}$:
\begin{lemma}\label{inversion}
Suppose $\vec{\psi} \in \End^{\op}\Kr{\vec{x}}{r}$ is of the form $\vec{\psi}=\vec{x}+\hat{\psi}$, $\hat{\psi}=O(x^2)$ and $\norm{\hat{\psi}}{r}<\varepsilon<r$. 
Then $\vec{\psi}$ admits a two-sided compositional inverse $\vec{g}=\vec{x}+\hat{g}$ and $\norm{\hat{g}}{r}<\varepsilon.$
\end{lemma}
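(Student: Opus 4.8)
The plan is to construct the inverse $\vec g$ by successive approximation, i.e.\ by the usual iteration $\vec g_0 = \vec x$, $\vec g_{m+1} = \vec x - \hat\psi \circ \vec g_m$ (so that $\vec g_{m+1}$ is the next Picard iterate of the fixed-point equation $\vec g = \vec x - \hat\psi(\vec g)$, which is equivalent to $\vec\psi \circ \vec g = \vec x$). First I would check that this iteration stays inside the ball $\{\vec h = \vec x + \hat h : \|\hat h\|_r \le \varepsilon\}$: writing $\vec g_m = \vec x + \hat g_m$, one has $\hat g_{m+1} = -\hat\psi(\vec x + \hat g_m)$, and since $\hat\psi = O(x^2)$ one can bound $\|\hat\psi(\vec x + \hat g_m)\|_r$ by $\|\hat\psi\|_r \cdot \max(1, \|\hat g_m\|_r/r)$-type estimates; because $\|\hat g_m\|_r < \varepsilon < r$, the composition only makes things smaller, so $\|\hat g_{m+1}\|_r \le \|\hat\psi\|_r < \varepsilon$. (This is essentially the $m=1$, $i\ge 2$ case of the bookkeeping in Lemma \ref{Taylor}: substituting a small perturbation into a power series with no linear term contracts norms.)

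Next I would show the sequence $\hat g_m$ is Cauchy in $\|\cdot\|_r$. The key estimate is a Lipschitz bound: for $\vec h, \vec h'$ in the ball, $\|\hat\psi(\vec h) - \hat\psi(\vec h')\|_r \le (\varepsilon/r)\,\|\hat h - \hat h'\|_r$ — this is exactly Lemma \ref{Taylor} applied with $A = \vec 0$ (or a direct coefficient computation of the same flavor), using that $\hat\psi = O(x^2)$ so every monomial contributing to the difference carries at least one factor of the form (coefficient of $\vec\varepsilon$)$\,\times r^{-|\cdot|}$ beyond the linear term, producing the contraction factor $\|\hat g_m - \hat g_{m-1}\|_r \cdot (\varepsilon/r) < \|\hat g_m - \hat g_{m-1}\|_r$. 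Hence $\|\hat g_{m+1} - \hat g_m\|_r = \|\hat\psi(\vec g_m) - \hat\psi(\vec g_{m-1})\|_r \le (\varepsilon/r)\|\hat g_m - \hat g_{m-1}\|_r$, so the iteration is a genuine contraction and $\hat g_m \to \hat g$ in the Banach algebra $\Kr{\vec x}{r}$, with $\|\hat g\|_r \le \sup_m \|\hat g_m\|_r < \varepsilon$ (strict, since each $\|\hat g_m\|_r \le \|\hat\psi\|_r < \varepsilon$ and the norm is non-Archimedean so the limit's norm is $\le$ the sup).

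Then $\vec g = \vec x + \hat g$ satisfies $\vec\psi \circ \vec g = \vec x$ by continuity of composition in these Banach algebras (passing to the limit in $\vec g_{m+1} = \vec x - \hat\psi(\vec g_m)$). It remains to see $\vec g$ is a two-sided inverse: since $\vec\psi = \vec x + O(x^2)$, it is also invertible on the left in the formal (non-convergent) power series monoid $\End^{\op}K\Laur{\vec x}$, and a monoid element with both a left and a right inverse has them equal; so the formal two-sided inverse of $\vec\psi$ coincides with our convergent $\vec g$, giving $\vec g \circ \vec\psi = \vec x$ as well. The only mild subtlety — the part I would be most careful about — is the composition/contraction estimate making the factor strictly $\varepsilon/r < 1$ rather than just $\le 1$; this needs the hypothesis $\varepsilon < r$ and the $O(x^2)$ condition working together, and is exactly the non-Archimedean analogue of the inverse function theorem on a small enough polydisk. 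Everything else is routine bookkeeping with the $r$-norm.
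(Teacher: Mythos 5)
Your argument is correct, but it takes a genuinely different route from the paper. The paper constructs the inverse by writing down the equation $\vec{g}_L\circ\vec{\psi}=\vec{x}$ with unknown coefficients, observing that the resulting system is upper-triangular in the monomial basis, and proving the bound $|a_I^j|<\varepsilon r^{-|I|}$ by induction on $|I|$; two-sidedness is then obtained by applying the left-inverse construction to $\vec{g}_L$ itself. You instead solve the opposite-side equation $\vec{\psi}\circ\vec{g}=\vec{x}$ by Picard iteration, getting existence, the norm bound, and convergence all at once from the Banach fixed-point theorem, and you settle two-sidedness by uniqueness of inverses in the monoid $\End^{\op}K\Laur{\vec{x}}$. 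Both are sound; yours is more conceptual (it is literally the non-Archimedean inverse function theorem), while the paper's is more elementary and stays entirely at the level of coefficients. Two small points of care in your version. First, the Lipschitz estimate you need is \emph{not} literally Lemma \ref{Taylor}: that lemma compares $\vec{f}(A\vec{x}+\vec{\varepsilon})$ with $\vec{f}(A\vec{x})$, i.e.\ a perturbation of the \emph{linear} point, whereas you must compare $\hat{\psi}(\vec{x}+\hat{h})$ with $\hat{\psi}(\vec{x}+\hat{h}')$ for two nonlinear points; the telescoping coefficient computation you allude to does give $\norm{\hat\psi(\vec h)-\hat\psi(\vec h')}{r}\leqslant (\norm{\hat\psi}{r}/r)\norm{\hat h-\hat h'}{r}$ using $\norm{h_i}{r}\leqslant r$ and submultiplicativity, so the contraction constant is indeed $<\varepsilon/r<1$, but this step should be written out rather than cited. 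Second, compositions such as $\hat\psi(\vec{x}+\hat g_m)$ should be understood coefficient-wise (each coefficient of the output is a finite sum, since the inputs have no constant term), because the series of monomial contributions need not converge in $\norm{\cdot}{r}$; all your sup-norm estimates are coefficient-wise and so remain valid under this reading, and norm convergence implies coefficient-wise convergence, so your identification of the Banach-space limit with the formal inverse goes through.
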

\begin{proof}
We first find the left inverse. Set $\vec g_L=\vec x -\hat \psi + \hat \psi^{\circ 2}-\hat \psi^{\circ 3}+\cdots$. The sum converges as $\norm{\hat\psi}{r}<\varepsilon<r$, and hence by \ref{fixed-Endop}(\ref{composition-inequality}), $\norm{\hat\psi^{\circ n}}{r}<\varepsilon^n/r^{n-1}$, by induction on $n$. Thus $\norm{\hat\psi^{\circ n}}{r}\to 0$ as $n\to\infty$. Moreover $\hat g_L:=\vec g_L-\vec x$ satisfies $\norm{\hat g_L}{r}<\varepsilon$ by the ultrametric inequality. Thus it suffices to show $\vec g_L$ is inverse to $\vec \psi$.

To see that $\vec g_L$ is left inverse to $\vec\psi$, we simply evaluate $\vec g_L\circ \vec\psi$; the sum telescopes.

The previous paragraphs shows that any tuple of power series $\vec{\psi}$ satisfying the conditions of the lemma admits a left inverse $\vec{g}_{L}$. Applying this to $\vec{g}_L$ shows that $\vec{g}_L$ admits a left inverse. Since $\vec{g}_L$ also has a right inverse, $\vec{\psi}$ is a two-sided inverse of $\vec{g}_L$, and $\vec{g}_L$ is a two-sided inverse of $\vec{\psi}$. 
\end{proof}
\begin{definition}\label{def-semisimple}
An element $\vec{f} \in \End^{\op}\Kr{\vec{x}}{r}$ is \slantsf{semisimple} if the operator $P \mapsto P \circ \vec{f}$  on $\Kr{\vec{x}}{r}/\mathscr{I}^m$ is semisimple for all $m$.
\end{definition}
\begin{remark}
The property of being semisimple is preserved under conjugation. For all $r'<r$ an element $\vec f \in \End^\op \Kr{\vec{x}}{r}$ is semisimple as an element of $\End^\op \Kr{\vec{x}}{r}$ if and only if it is semisimple as an element of $\End^\op \Kr{\vec{x}}{r'}$.

\end{remark}
\begin{lemma}\label{off-diagonal}
Suppose $\vec{f} \in \End^{\op}\Kr{\vec{x}}{r}$ is semisimple, $\vec{f}=A\vec{x}+O(x^2)$, and $A$ is a diagonal matrix with coefficients $\vec{\lambda}=(\lambda_1, \dots, \lambda_n)$. If $I$ is a word and $j$ is an index such that $\vec{\lambda}^I=\lambda_j$, then the coefficient of $x^I$ in $f_j$ is zero.
\end{lemma}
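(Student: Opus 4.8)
The plan is to argue one truncation at a time. Fix $m$ and let $T$ denote the $K$-linear operator $P\mapsto P\circ\vec f$ on $W\colonequals\Kr{\vec x}{r}/\mathscr I^m$. Since $\vec f$ has no constant term, $T$ preserves the weight filtration $\mathscr I^\bullet/\mathscr I^m$, and since the linear part of $\vec f$ is $A\vec x$ with $A=\mathrm{diag}(\vec\lambda)$, the induced operator on each graded piece $\mathscr I^t/\mathscr I^{t+1}$ is diagonal in the monomial basis, sending $x^{I'}$ to $\vec\lambda^{I'}x^{I'}$. By hypothesis $T$ is semisimple, so $W=\bigoplus_\mu W^{(\mu)}$; as $W$ is finite dimensional each spectral projector $\pi_\mu$ is a polynomial in $T$, hence preserves the filtration and induces on $\mathscr I^t/\mathscr I^{t+1}$ the coordinate projection onto the span of the monomials $x^{I'}$ with $\vec\lambda^{I'}=\mu$. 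I will freely use that quotients and $T$-stable subspaces of $W$ inherit semisimplicity, as we are in characteristic zero.

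The first step is to produce, for each index $j$, an honest $\lambda_j$-eigenvector lifting $x_j$: since $\pi_{\lambda_j}$ acts as the identity on the $\lambda_j$-part of the weight-one piece $\mathscr I/\mathscr I^2$, the element $v_j\colonequals\pi_{\lambda_j}(x_j)$ is a $\lambda_j$-eigenvector with $v_j\equiv x_j\pmod{\mathscr I^2}$. Writing $v_j=x_j+w_j$ with $w_j\in\mathscr I^2/\mathscr I^m$ gives $f_j=T(x_j)=\lambda_j v_j-T(w_j)=\lambda_j x_j+(\lambda_j-T)(w_j)$, so that $\widehat f_j\colonequals f_j-\lambda_j x_j$ lies in the image of $\lambda_j-T$ restricted to the $T$-stable subspace $\mathscr I^2/\mathscr I^m$. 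Because that restriction is semisimple, this image equals $\bigoplus_{\mu\neq\lambda_j}(\mathscr I^2/\mathscr I^m)^{(\mu)}$, so it remains to show that an element of $\bigoplus_{\mu\neq\lambda_j}(\mathscr I^2/\mathscr I^m)^{(\mu)}$ has vanishing coefficient on every monomial $x^I$ with $\vec\lambda^I=\lambda_j$; applying this to $\widehat f_j$ and letting $m\to\infty$ proves the lemma.

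For $|I|=2$ this last assertion is easy, and it is instructive to see it directly via a Jordan-block obstruction: the span $N$ of the weight-$2$ monomials $x^{I'}$ with $\vec\lambda^{I'}\neq\lambda_j$ is $T$-stable in $\Kr{\vec x}{r}/\mathscr I^3$, and in $W/N$ one has $(T-\lambda_j)x_j=\sum_{|I'|=2,\ \vec\lambda^{I'}=\lambda_j}a^{(j)}_{I'}x^{I'}$ while $(T-\lambda_j)$ annihilates each such $x^{I'}$; hence if some resonant weight-$2$ coefficient $a^{(j)}_{I'}$ were nonzero, $x_j$ would be a rank-two generalized $\lambda_j$-eigenvector in $W/N$, contradicting semisimplicity of $T$ on $W/N$. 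For larger $|I|$ I would proceed by induction on $|I|$: comparing the coefficient of $x^I$ on the two sides of $Tv=\mu v$ for a $\mu$-eigenvector $v=\sum c_{I'}x^{I'}$ (with $\mu\neq\lambda_j$ and $c_{I'}=0$ for $|I'|\leqslant 1$) rewrites $(\lambda_j-\mu)c_I$ as a combination of the $c_{I'}$ with $|I'|<|I|$ and of the coefficients of $\vec f$ of weight $<|I|$, which by the inductive hypothesis on $\widehat f$ are non-resonant.

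The step I expect to be the main obstacle is closing this induction — equivalently, finding the correct $T$-stable subquotient that plays the role of $W/N$ in weight $>2$. The weight-$2$ argument succeeded because weight-$2$ monomials are literally eigenvectors of $T$ modulo $\mathscr I^3$; in higher weight the monomials are no longer eigenvectors, and one must instead exploit the compatibility of the eigenspace decompositions of $T$ across all the truncations simultaneously — i.e. that the leading-term map identifies $\mathrm{gr}^t(W^{(\mu)})$ with the $\mu$-eigenspace of $\mathrm{gr}^t(T)$ for every $t$ — in order to control how the permitted non-resonant lower-order terms of $\vec f$ propagate up the filtration and to rule out their conspiring to feed a resonant monomial back into $\widehat f_j$.
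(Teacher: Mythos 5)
Your reduction (via the spectral projectors $\pi_\mu$ and the identity $\hat f_j=(\lambda_j-T)(w_j)$) is correct, and your weight-two argument is a complete proof of the case $|I|=2$ --- it is in fact a careful version of what the paper's own one-line proof is gesturing at. But the obstacle you flag at the end is not merely the hard step of an induction you haven't finished: it is a genuine obstruction. The statement you reduced the lemma to --- that a $T$-eigenvector with eigenvalue $\mu\neq\lambda_j$ has vanishing coefficient on every monomial $x^I$ with $\vec{\lambda}^I=\lambda_j$ --- is false once $|I|\geqslant 3$, because the higher-order corrections to an eigenvector are not confined to the graded $\mu$-eigenspaces; and in fact the lemma itself fails for $|I|\geqslant 3$. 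Take $n=3$, $\lambda_1=1$, $\lambda_2\lambda_3=1$ with $\lambda_2\neq 1$, put $\vec{\psi}=(x_1+x_1x_2,\ x_2+x_2x_3,\ x_3)$ and $\vec{f}=\vec{\psi}^{-1}\circ(A\vec{x})\circ\vec{\psi}$. Then $P\mapsto P\circ\vec{f}$ is conjugate, via the filtration-preserving automorphism $P\mapsto P\circ\vec{\psi}$, to the diagonalizable operator $P\mapsto P(A\vec{x})$ on every truncation, so $\vec{f}$ is semisimple; but a direct computation gives
\[
f_1=\lambda_1x_1+\lambda_1(1-\lambda_2)\,x_1x_2+\lambda_1\lambda_2(\lambda_2-1)\,x_1x_2x_2+\lambda_1(1-\lambda_2)\,x_1x_2x_3+O(x^4),
\]
and the word $I=(1,2,3)$ satisfies $\vec{\lambda}^I=\lambda_1\lambda_2\lambda_3=\lambda_1$ while its coefficient $\lambda_1(1-\lambda_2)$ in $f_1$ is nonzero. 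This is exactly the conspiracy you were worried about: the non-resonant couplings $x_1\to x_1x_2$ and $x_1x_2\to x_1x_2x_3$ chain together to feed the resonant monomial, and semisimplicity forces only a proportionality relation among the three coefficients, not their vanishing.

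For comparison, the paper's proof is a one-line appeal to the principle that a semisimple operator, triangular for the weight filtration with diagonal graded pieces, has vanishing matrix entry between two basis vectors with equal diagonal entries; the example above shows this principle is false in general. It is valid only when the two monomials sit in adjacent graded pieces (no intermediate weights through which a chain can pass), which is exactly why your $W/N$ argument, and hence the case $|I|=2$, goes through. So you should not try to close the induction: the lemma must either be restricted to $|I|=2$ or restated (e.g.\ as ``a semisimple $\vec{f}$ is formally conjugate to $A\vec{x}$,'' which is what your eigenvector lifts actually produce). Note that in the paper's application the $\lambda_i$ are $q$-Weil numbers of weights $-1$ and $-2$, so any resonance $\vec{\lambda}^I=\lambda_j$ forces $|I|=2$ by comparing weights; the case you did prove is the only one genuinely needed downstream, but neither your proposed route to the general statement nor the paper's one-line argument can be completed as written.
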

\begin{proof}
The operator $F \mapsto F \circ \vec{f}$ on $K\Laur{\vec{x}}/\mathscr{I}^{|I|+1}$ is upper triangular in the monomial basis. Therefore, since the diagonal entries corresponding to the coefficients of $x^I$ and $x_j$ are equal and the operator is semisimple, the corresponding off-diagonal coefficient $a_I^j$ is zero.
\end{proof}
\begin{definition}\label{def:l-siegel}
A tuple of numbers $\lambda_1, \dots, \lambda_n \in \overline{\Q_\ell}$ is said to satisfy \emph{$\ell$-Siegel's condition with parameters $c, \mu>0$} if the following condition holds: 
for any tuple of nonnegative integers $i_1, \dots, i_n$ with $i_1+\dots+i_n=N$ and any index $j$ such that $\lambda_1^{i_1}\lambda_2^{i_2}\dots\lambda_n^{i_n} \neq \lambda_j$, the following inequality holds
\[\vert\lambda_1^{i_1}\lambda_2^{i_2}\dots\lambda_n^{i_n} - \lambda_j \vert \geqslant c(N/2)^{-\mu}.\]
\end{definition}
\begin{remark}
Replacing $\lambda_j$ with $1$ and $N/2$ with $N$ leads to an equivalent definition with different constants; the form given in Definition \ref{def:l-siegel} is more convenient for our applications.
\end{remark}

Siegel's condition holds for algebraic $\lambda_i$, as the following proposition shows.

\begin{proposition}[Linear forms in logarithms, \cite{yu-baker}]\label{prop:baker}
Suppose $\lambda_1, \dots, \lambda_n\in \overline{\mathbb{Q}_\ell}$ are algebraic numbers. Then there exist constants $c, \mu >0$ such that for any integers $i_1,\dots , i_n,j$ such that $\lambda_1^{i_1}\dots \lambda_n^{i_n} \neq \lambda_j$ the following inequality holds:
\[\left| \lambda_1^{i_1}\dots\lambda_n^{i_n}-\lambda_j \right| \geqslant c (|i_1|+\dots+|i_n|)^{-\mu}.\]
\end{proposition}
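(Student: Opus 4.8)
The plan is to deduce Proposition~\ref{prop:baker} directly from Kunrui Yu's $\ell$-adic analogue of Baker's theorem on linear forms in logarithms; essentially all of the content is the citation, and what remains is a routine reformulation. (Here the absolute value is the $\ell$-adic one: fix an embedding $\Qbar\hookrightarrow\Qlbar$ so that the $\lambda_i$ acquire well-defined $\ell$-adic valuations, normalized by $v_\ell(\ell)=1$, and read the inequality as a polynomially-decaying lower bound on $|\lambda_1^{i_1}\cdots\lambda_n^{i_n}-\lambda_j|_\ell$, in the shape required by Definition~\ref{def:l-siegel}.) Yu's theorem, in the form we need, asserts: given nonzero algebraic numbers $\alpha_1,\dots,\alpha_m$, there is an effectively computable $C>0$, depending only on $m$, on $\ell$, and on the heights and degrees of the $\alpha_i$, such that for all $b_1,\dots,b_m\in\Z$ with $\alpha_1^{b_1}\cdots\alpha_m^{b_m}\neq 1$ one has $v_\ell(\alpha_1^{b_1}\cdots\alpha_m^{b_m}-1)\leqslant C\log B$, where $B=\max(3,|b_1|,\dots,|b_m|)$. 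Exponentiating gives $\left|\alpha_1^{b_1}\cdots\alpha_m^{b_m}-1\right|_\ell\geqslant B^{-\mu}$ with $\mu:=C\log\ell>0$, and since $B\leqslant|b_1|+\dots+|b_m|\leqslant mB$, also $\left|\alpha_1^{b_1}\cdots\alpha_m^{b_m}-1\right|_\ell\geqslant\left(|b_1|+\dots+|b_m|\right)^{-\mu}$.

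Next I would reduce the proposition to this estimate. First reduce to the case that every $\lambda_k$ occurring with a nonzero exponent is itself nonzero: if some such $\lambda_k$ vanishes (necessarily with $i_k>0$), then $\lambda_1^{i_1}\cdots\lambda_n^{i_n}=0$, so $\lambda_1^{i_1}\cdots\lambda_n^{i_n}-\lambda_j=-\lambda_j$, which is nonzero by hypothesis and takes only finitely many values as $j$ varies, so the bound holds after shrinking $c$. So assume $\lambda_1,\dots,\lambda_n$ are all nonzero, and fix $j\in\{1,\dots,n\}$. If $\lambda_j\neq 1$, write
\[\lambda_1^{i_1}\cdots\lambda_n^{i_n}-\lambda_j=\lambda_j\left(\textstyle\prod_k\lambda_k^{i_k-\delta_{kj}}-1\right),\]
and note $\lambda_1^{i_1}\cdots\lambda_n^{i_n}\neq\lambda_j$ is equivalent to $\prod_k\lambda_k^{i_k-\delta_{kj}}\neq 1$; applying the estimate of the previous paragraph to the tuple $(\lambda_1,\dots,\lambda_n)$ with exponents $b_k=i_k-\delta_{kj}$ (so $\sum_k|b_k|\leqslant 1+\sum_k|i_k|$) yields $\left|\prod_k\lambda_k^{i_k-\delta_{kj}}-1\right|_\ell\geqslant\left(1+\sum_k|i_k|\right)^{-\mu}$, hence $\left|\lambda_1^{i_1}\cdots\lambda_n^{i_n}-\lambda_j\right|_\ell\geqslant|\lambda_j|_\ell\left(1+\sum_k|i_k|\right)^{-\mu}$. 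If instead $\lambda_j=1$, apply the estimate directly with exponents $b_k=i_k$. In all cases, since $|\lambda_j|_\ell$ ranges over a finite set of positive reals and $1+\sum_k|i_k|\leqslant 2\max(1,\sum_k|i_k|)$, one gets $\left|\lambda_1^{i_1}\cdots\lambda_n^{i_n}-\lambda_j\right|_\ell\geqslant c\left(\sum_k|i_k|\right)^{-\mu}$ for suitable $c>0$; the degenerate case $\sum_k|i_k|=0$ forces $\lambda_1^{i_1}\cdots\lambda_n^{i_n}=1\neq\lambda_j$ and is absorbed into $c$.

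The crux is entirely Yu's theorem, which we use as a black box, so I do not expect a serious obstacle. The only point requiring attention is that a single pair $(c,\mu)$ works uniformly over all exponent tuples $(i_1,\dots,i_n)$ and all $j\in\{1,\dots,n\}$ — this is automatic, because Yu's constant depends only on the fixed finite data $\lambda_1,\dots,\lambda_n$ and on $\ell$, not on the $b_k$, and $\{1,\dots,n\}$ is finite. I would also flag (as in the introduction) that this constant, and hence $\mu$, degrades as $\ell\to\infty$, which is exactly the source of our inability to bound $N(X,\ell)$ uniformly in $\ell$.
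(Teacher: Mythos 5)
Your proposal is correct and follows essentially the same route as the paper: reduce to a lower bound on $\left|\lambda_1^{i_1}\cdots\lambda_n^{i_n}-1\right|_\ell$ by factoring out $\lambda_j$ and absorbing $|\lambda_j|_\ell$ into the constant $c$, then cite Yu's $\ell$-adic Baker theorem as a black box. The only cosmetic difference is that the paper handles negative exponents by enlarging the tuple to $\lambda_1,\dots,\lambda_n,\lambda_1^{-1},\dots,\lambda_n^{-1}$ so as to quote Yu's Theorem~$1'$ for nonnegative exponents, whereas you invoke the version for arbitrary integer exponents directly (and you add some harmless edge-case bookkeeping the paper omits).
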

\begin{proof}
After replacing the tuple $\lambda_1, \dots, \lambda_n$ with $\lambda_1, \dots, \lambda_n, \lambda_1^{-1}, \dots, \lambda_n^{-1}$ it suffices to prove the inequality for positive integers $i_j$. Also, by changing $c$ to a different constant it suffices to show that $\left| \lambda_1^{i_1}\dots\lambda_n^{i_n}-1 \right| \geqslant c (|i_1|+\dots+|i_n|)^{-\mu}.$ This is proved in \cite[Theorem 1\textquotesingle]{yu-baker}.
\end{proof}
Our goal is to conjugate a (semisimple vector of) power series $\vec{f}=A\vec{x}+O(x^2)$ to its linear part $A\vec{x}$. We do so iteratively, on every step conjugating $f$ by some other power series so that the norm of $\vec{f}-A\vec{x}$ on a slightly smaller disk becomes much smaller than before. The key to the inductive step is Lemma \ref{Siegel iteration}, which itself uses the following simple estimate.
\begin{lemma}\label{calculus1}
Suppose $\eta \in (0,1)$ and $\mu>0$ are real numbers. Then 
\[\sup_{i \in \Z_{\geqslant 0}} (1-\eta)^ii^\mu \leqslant \left(\frac{\eta}{7\mu}\right)^{-\mu}.\]
\end{lemma}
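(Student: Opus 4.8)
The plan is to reduce the discrete supremum to a supremum over the reals and then do elementary single-variable calculus. First I would note that the $i=0$ term contributes $0$, so it suffices to bound $\sup_{t \geqslant 0} g(t)$, where $g(t) \colonequals (1-\eta)^t t^\mu = \exp\!\big(t\log(1-\eta) + \mu \log t\big)$. Since $g(0)=0$, $g>0$ on $(0,\infty)$, and $g(t)\to 0$ as $t\to\infty$, the function $g$ attains its maximum at the unique critical point of $\log g$, namely $t_\ast = \mu/(-\log(1-\eta))$.

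Next I would evaluate $g$ at this point: since $t_\ast \log(1-\eta) = -\mu$, we get $g(t_\ast) = e^{-\mu} t_\ast^\mu$. It then remains only to bound $t_\ast$ from above. Using the elementary inequality $\log(1-\eta)\leqslant -\eta$ for $\eta\in[0,1)$ (immediate from the power series expansion, or from concavity of $\log$), we have $-\log(1-\eta)\geqslant \eta$ and hence $t_\ast \leqslant \mu/\eta$. Therefore
\[\sup_{i\in\Z_{\geqslant 0}}(1-\eta)^i i^\mu \;\leqslant\; g(t_\ast) \;=\; e^{-\mu}t_\ast^\mu \;\leqslant\; \left(\frac{\mu}{e\eta}\right)^{\!\mu} \;\leqslant\; \left(\frac{7\mu}{\eta}\right)^{\!\mu} \;=\; \left(\frac{\eta}{7\mu}\right)^{\!-\mu},\]
the last inequality holding because $1/e < 7$.

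There is no serious obstacle here; the only points needing a (routine) check are the estimate $-\log(1-\eta)\geqslant\eta$ and the verification that the critical point of $g$ is a genuine maximum rather than a different kind of critical point. In fact the argument proves the stronger bound with $7$ replaced by $e$; the looser constant is presumably retained only for notational convenience in the applications (Lemma \ref{Siegel iteration} and beyond).
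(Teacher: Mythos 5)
Your proof is correct. It does take a different route from the paper's: you locate the exact maximizer of $g(t)=(1-\eta)^t t^\mu$ by differentiating $\log g$, evaluate $g(t_\ast)=e^{-\mu}t_\ast^\mu$ at $t_\ast=\mu/(-\log(1-\eta))$, and then use $-\log(1-\eta)\geqslant\eta$; the paper instead never differentiates anything, deriving the bound from a chain of elementary inequalities --- $e^u\geqslant u$ applied with $u=\eta x/(7\mu)$, the concavity estimate $\log(1+\eta)\geqslant\eta/7$ on $(1,2)$, and $1-\eta<1/(1+\eta)$ --- which yields $(1-\eta)^{x/\mu}\leqslant 7\mu/(\eta x)$ and hence the claim after raising to the $\mu$-th power. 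Your optimization argument buys a sharper constant ($e$ in place of $7$, and in fact the bound $e^{-\mu}\bigl(\mu/(-\log(1-\eta))\bigr)^\mu$ is the true supremum of the continuous relaxation), at the cost of the routine verifications you flag (that $t_\ast$ is a genuine interior maximum, using $g(0^+)=0$ and $g(t)\to 0$ as $t\to\infty$, and the inequality $-\log(1-\eta)\geqslant\eta$); the paper's version is marginally more self-contained since it needs no first-order analysis at all. Since only the qualitative shape $C(\mu)\,\eta^{-\mu}$ of the bound is used downstream in Lemma \ref{Siegel iteration}, either constant is fine.
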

\begin{proof}
For every positive $x$ we have \[e^{\frac{\eta x}{7\mu}} \geqslant \frac{\eta x}{7\mu}.\]

By convexity of $\log$ on $(1,2)$ we have $\log(1+\eta) \geqslant \frac{\eta}{7}$. Therefore 
\[e^{\frac{\log(1+\eta)}{\mu}x} \geqslant e^{\frac{\eta x}{7\mu}} \geqslant \frac{\eta x}{7\mu},\] Since $1/(1+\eta) > 1-\eta$ inverting both sides we get
\[\frac{7\mu}{\eta x} \geqslant (1-\eta)^{x/\mu}, \text{or}\]
\[x (1-\eta)^{x/\mu} \leqslant \frac{7\mu}{\eta}. \]
Raising both sides to the power $\mu$ gives
\[x^\mu (1-\eta)^{x} \leqslant \left(\frac{\eta}{7\mu}\right)^{-\mu}\]
\end{proof}
\begin{lemma}\label{Siegel iteration}
Given $\delta>0$, suppose $\vec{f}\in \End^{\op}\Kr{\vec{x}}{r}$ is such that $\vec{f}=A\vec{x}+\hat{f}(x), \hat{f}=O(x^2)$ and $\norm{ {\hat f} }{r} < \delta$. Suppose that $\vec{f}$ is semisimple, $A$ is a diagonal matrix, and that the eigenvalues $\lambda_1, \dots, \lambda_n$ of $A$ satisfy $\ell$-Siegel's condition with parameters $c, \mu$, $\mu>2/7$. Suppose also $|\lambda_i| \leqslant 1$. Suppose $\eta\in(0,1)$ satisfies $c^{-1}(7\mu)^\mu \eta^{-\mu}\delta < r$. Then there exists an invertible endomorphism $\vec{\psi} \in \End^{\op}\Kr{\vec x}{r(1-\eta)}$, with $\vec{\psi}=\vec{x}+O(x^2)$, such that the following inequalities hold:
\[\norm{\vec{\psi} - \vec{x}}{r(1-\eta)} \leqslant c^{-1}\delta(1-\eta)\left(\frac{\eta}{7\mu}\right)^{-\mu}<r(1-\eta)\]
\[\norm{ \vec{\psi}^{-1} \circ \vec{f} \circ \vec{\psi} - A\vec{x}}{r(1-\eta)} \leqslant  c^{-1}(7\mu)^{\mu}\frac{ \delta^2 }{\eta^{\mu}r}\]  
\end{lemma}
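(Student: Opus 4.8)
The plan is to perform a single step of the Newton-type iteration underlying the linearization theorem. Write $L$ for the linear endomorphism $\vec x\mapsto A\vec x$; I will look for $\vec\psi=\vec x+\hat\psi$ with $\hat\psi=O(x^2)$ that conjugates $\vec f$ toward $L$. A direct computation in $\End^{\op}\Klaur{\vec x}$ gives
\[\vec f\circ\vec\psi-\vec\psi\circ L=\bigl(\hat f-(\hat\psi\circ L-L\circ\hat\psi)\bigr)+\bigl(\hat f\circ\vec\psi-\hat f\bigr),\]
so I would define $\hat\psi$ by solving the \emph{homological equation} $\hat\psi\circ L-L\circ\hat\psi=\hat f$ coefficientwise. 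Writing $\hat f_i=\sum_I a^i_Ix^I$ and $\hat\psi_i=\sum_I b^i_Ix^I$, and noting that, $A$ being diagonal, the $x^I$-coefficient of $(\hat\psi\circ L-L\circ\hat\psi)_i$ equals $(\vec\lambda^I-\lambda_i)b^i_I$, one is forced to take $b^i_I=a^i_I/(\vec\lambda^I-\lambda_i)$ when $\vec\lambda^I\neq\lambda_i$; when $\vec\lambda^I=\lambda_i$ --- necessarily $|I|\ge 2$ --- Lemma~\ref{off-diagonal} forces $a^i_I=0$, so one sets $b^i_I=0$. This produces a well-defined formal $\vec\psi=\vec x+\hat\psi$ with $\hat\psi=O(x^2)$; this is the single place where semisimplicity of $\vec f$ is used, playing the role of the non-resonance condition in the classical argument.

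Set $\varrho\colonequals r(1-\eta)$. I would next estimate $\hat\psi$ on the disk of radius $\varrho$: for the nonresonant coefficients, $\ell$-Siegel's condition (Definition~\ref{def:l-siegel}) bounds $|\vec\lambda^I-\lambda_i|_\ell$ below by $c(|I|/2)^{-\mu}$, so $|b^i_I|\varrho^{|I|}$ is at most a fixed multiple of $(|I|/2)^\mu(1-\eta)^{|I|}\,|a^i_I|\,r^{|I|}\le (|I|/2)^\mu(1-\eta)^{|I|}\norm{\hat f}{r}$. Since $|I|\ge 2$, I would split $(1-\eta)^{|I|}=(1-\eta)^{|I|/2}(1-\eta)^{|I|/2}$, bound $(|I|/2)^\mu(1-\eta)^{|I|/2}\le\sup_{t\ge 0}t^\mu(1-\eta)^t\le(\eta/7\mu)^{-\mu}$ by the real-variable form of Lemma~\ref{calculus1}, and bound the leftover factor by $1-\eta$; this yields the first asserted inequality $\norm{\vec\psi-\vec x}{\varrho}\le c\delta(1-\eta)(\eta/7\mu)^{-\mu}$. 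Rewriting $(\eta/7\mu)^{-\mu}=(7\mu)^\mu\eta^{-\mu}$, the hypothesis $c(7\mu)^\mu\eta^{-\mu}\delta<r$ makes the right-hand side $<r(1-\eta)=\varrho$, hence $<1$; Lemma~\ref{inversion} then supplies the two-sided compositional inverse $\vec\psi^{-1}=\vec x+\hat g\in\End^{\op}\Kr{\vec x}{\varrho}$ with $\norm{\hat g}{\varrho}<\varrho$.

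For the new nonlinear term, set $\vec f'\colonequals\vec\psi^{-1}\circ\vec f\circ\vec\psi=A\vec x+\hat f'$. From $\vec\psi\circ\vec f'=\vec f\circ\vec\psi$ together with the homological equation (cancelling the part linear in $\hat\psi$) one gets the identity
\[\hat f'=\bigl(\hat f\circ\vec\psi-\hat f\bigr)-\bigl(\hat\psi\circ\vec f'-\hat\psi\circ L\bigr).\]
Here $\hat f\circ\vec\psi-\hat f=\hat f(\vec x+\hat\psi)-\hat f(\vec x)$, so Lemma~\ref{Taylor} (with the identity matrix, valid since $\norm{\hat\psi}{\varrho}<1$) bounds this term by $\norm{\hat f}{\varrho}\norm{\hat\psi}{\varrho}\le\delta\norm{\hat\psi}{\varrho}$, quadratically small in $\delta$; and $\hat\psi\circ\vec f'-\hat\psi\circ L=\hat\psi(A\vec x+\hat f')-\hat\psi(A\vec x)$, so --- once one knows $\norm{\hat f'}{\varrho}<1$ --- Lemma~\ref{Taylor} (with the matrix $A$, using $|\lambda_i|\le 1$) bounds it by $\norm{\hat\psi}{\varrho}\norm{\hat f'}{\varrho}$. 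Since $\norm{\hat\psi}{\varrho}<1$, the resulting inequality $\norm{\hat f'}{\varrho}\le\max\bigl(\delta\norm{\hat\psi}{\varrho},\,\norm{\hat\psi}{\varrho}\norm{\hat f'}{\varrho}\bigr)$ forces $\norm{\hat f'}{\varrho}\le\delta\norm{\hat\psi}{\varrho}$; inserting the bound on $\norm{\hat\psi}{\varrho}$ gives $\norm{\hat f'}{\varrho}\le c\delta^2(1-\eta)(\eta/7\mu)^{-\mu}$, which is $\le\frac{c(7\mu)^\mu}{\eta^\mu-\delta c(7\mu)^\mu}\delta^2$ because $(1-\eta)\bigl(\eta^\mu-\delta c(7\mu)^\mu\bigr)<\eta^\mu$ (the denominator being positive by the hypothesis on $\eta$).

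The step I expect to be the main obstacle is making this last estimate honest: to apply Lemma~\ref{Taylor} to $\hat\psi\circ\vec f'-\hat\psi\circ L$ one must first know that $\vec f'=\vec\psi^{-1}\circ\vec f\circ\vec\psi$ converges on the disk of radius $\varrho$ and that $\norm{\hat f'}{\varrho}<1$. I would obtain both by a bootstrap: the $x^I$-coefficient of $\hat f'$ depends, via the identity above, only on coefficients of $\hat f'$ of weight strictly less than $|I|$ (the composition operators are triangular in the monomial basis and $\hat\psi=O(x^2)$), so one inducts on the weight, the induction closing from $\norm{\hat\psi}{\varrho}<\varrho$ once $\delta$ is small --- as it is in the iterative regime this lemma serves. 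The other point needing care is pure bookkeeping: there is exactly one loss of domain, from $r$ to $r(1-\eta)$, and it must be absorbed entirely into the small-divisor estimate; the splitting $(1-\eta)^{|I|}=(1-\eta)^{|I|/2}(1-\eta)^{|I|/2}$ followed by Lemma~\ref{calculus1} does precisely that, and this is also where the hypotheses $\mu>2/7$ and $c(7\mu)^\mu\eta^{-\mu}\delta<r$ are called upon to keep the constants in range.
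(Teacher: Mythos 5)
Your proposal is correct and follows essentially the same route as the paper: the same homological equation $\hat\psi\circ L-L\circ\hat\psi=\hat f$ solved coefficientwise (with Lemma~\ref{off-diagonal} killing the resonant terms), the same Siegel-condition-plus-Lemma~\ref{calculus1} estimate for $\norm{\hat\psi}{r(1-\eta)}$, Lemma~\ref{inversion} for invertibility, and the identical functional identity for $\hat f'$ estimated via two applications of Lemma~\ref{Taylor}. The only differences are cosmetic --- your ultrametric ``max'' argument in place of the paper's solve-for-$G$ step gives a marginally sharper bound, and your weight-induction bootstrap makes explicit a convergence point the paper leaves implicit.
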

\begin{proof}
Let $\hat{\psi}$ be the solution to the equation $\hat{\psi}(A\vec{x})-A \hat{\psi}(\vec{x})=\hat{f}(\vec{x})$ with $\hat\psi=O(x^2)$. This equation can be solved as $A$ is diagonal: if $a_I^j$ is the coefficient of $x^{I}$ in $f_j$, then the coefficient of $x^I$ in $\psi_j$ is $a_I^j/(\lambda^I-\lambda_j)$ if $\lambda^I-\lambda_j$ is nonzero, and it is zero otherwise. (Note that $a_I^j=0$ whenever $\lambda^I=\lambda_j$ by Lemma \ref{off-diagonal}.) To check that $\hat{\psi}$ converges on the disk of radius $r(1-\eta)$ we use the Siegel condition:
\[\left|\frac{a_{I}^j}{\lambda^I - \lambda_j}\right| r^{|I|}(1-\eta)^{|I|}\leqslant \delta (1-\eta)^{|I|}|\lambda^{|I|}-\lambda_j|^{-1} \overset{\text{[by Siegel's property]}}{\leqslant} c^{-1}\delta (1-\eta)^{|I|}(|I|/2)^{\mu} \underset{|I| \to \infty}{\longrightarrow} 0.\]
Similarly, we directly estimate the norm of $\hat{\psi}$ using the Siegel condition as follows:

\[ \Vert \hat{\psi} \Vert_{r(1-\eta)}= \sup_{I,j} \left\vert \frac{a_I^j}{\lambda^I - \lambda_j}\right\vert r^{|I|}(1-\eta)^{|I|}  \leqslant \delta \sup_{I,j} (1-\eta)^{|I|}\left\vert\lambda^I - \lambda_j \right \vert^{-1} \overset{\text{[by Siegel's property]}}{\leqslant}  \]
\[\delta \sup_i c^{-1}(1-\eta)^i (i/2)^\mu \leqslant c^{-1}\delta \max(\sup_{i\geqslant 2} (1-\eta)^i (i-1)^{\mu}, (1-\eta)2^{-\mu}) \overset{[\text{Index shift}]}=\]\[ 
 c^{-1}\delta \max((1-\eta)\sup_{i\geqslant 1} (1-\eta)^i i^{\mu}, (1-\eta)2^{-\mu})\overset{[\text{Lemma }  \ref{calculus1}]}{\leqslant} c^{-1}\delta (1-\eta)\left(\frac{\eta}{7\mu}\right)^{-\mu}.\]

In the last inequality we have used the condition $\mu>2/7$ to resolve the maximum.
Note that by our choice of $\eta$, we thus have $\norm{\hat\psi}{r(1-\eta)}<r(1-\eta)$, and so $\hat\psi$ is in $\End^\op \Kr{\vec x}{r(1-\eta)}.$

We now show that the function $\vec{\psi}=\vec{x}+\hat{\psi}$ satisfies the conditions of the lemma. Note that $\vec{\psi}$ is invertible by Lemma \ref{inversion}, again as $\norm{\hat\psi}{r(1-\eta)}<r(1-\eta)$. We need to estimate the norm $\Vert \vec{\psi}^{-1}\circ \vec{f} \circ \vec{\psi} - A\vec{x} \Vert_{r(1-\eta)}$. Let $\vec{g}$ denote the function $\vec{\psi}^{-1}\circ \vec{f} \circ \vec{\psi}$, and write $\vec{g}=A\vec{x}+\hat{g}$. We now use the functional equation for $\hat{\psi}$ to derive an equation for $\hat{g}$:
\[\vec{\psi}(\vec{g}(\vec{x}))=A\vec{\psi}(\vec{x})+\hat{f}(\vec{\psi}(\vec{x}))\]
\[\hat{g}(\vec{x})+\hat{\psi}(A\vec{x} + \hat{g}(\vec{x}))= A \hat{\psi}(\vec{x}) + \hat{f}(\vec{x}+\hat{\psi}(\vec{x}))\]
\[\hat{g}(\vec{x})+\hat{\psi}(A\vec{x} + \hat{g}(\vec{x}))= \hat{\psi}(A\vec{x})-\hat{f}(\vec{x}) + \hat{f}(\vec{x}+\vec{\psi}(\vec{x}))\]
\[\hat{g}(\vec{x})=\left[\hat{\psi}(A\vec{x})-\hat{\psi}(A\vec{x}+\hat{g}(\vec{x}))\right]+\left[\hat{f}(\vec{x}+\hat{\psi}(\vec{x}))-\hat{f}(\vec{x})\right].\]
The right hand side of the last equation is visibly ``small''; if $G=\Vert \hat{g}\Vert_{r(1-\eta)}$, then applying Lemma \ref{Taylor} and the ultrametric inequality we get:
\[G \leqslant \frac{1}{r(1-\eta)}\max\{\Vert \hat{\psi} \Vert_{r(1-\eta)}G, \Vert \hat{f}\Vert_{r(1-\eta)} \Vert \hat{\psi}\Vert_{r(1-\eta)}\}\leqslant \frac{1}{r(1-\eta)}\Vert \hat{f}\Vert_{r(1-\eta)} \Vert \hat{\psi}\Vert_{r(1-\eta)}.\]
Here the last inequality holds because $G\leqslant \frac{1}{r(1-\eta)}\Vert \hat{\psi} \Vert_{r(1-\eta)}G$ is impossible, unless $G=0$, as $\frac{1}{r(1-\eta)}\Vert \hat{\psi} \Vert_{r(1-\eta)}<1;$ and if $G=0$, the inequality holds in any case.
Using the estimates $\Vert\hat{\psi}\Vert_{r(1-\eta)} \leqslant c^{-1}(7\mu)^{\mu} \delta (1-\eta)\eta^{-\mu}$ and $\Vert f \Vert_{r(1-\eta)} \leqslant \Vert f \Vert_{r}< \delta$ we get:
\[G \leqslant \frac{c^{-1}(7\mu)^{\mu} \delta^2 \eta^{-\mu}}{r}.\]\qedhere
\end{proof}

To apply Lemma \ref{Siegel iteration} we will need the following simple estimate. \
\begin{lemma}\label{calculus2}
Let $u\in (0,0.5), \alpha>1$ be real numbers. Then
\[\prod_{n=0}^\infty \left(1-\frac{u}{\alpha^n}\right) > e^{-\frac{\alpha}{\alpha - 1}}\]
\end{lemma}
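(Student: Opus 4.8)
The plan is to reduce the infinite product to an infinite sum of logarithms and bound the sum termwise by a geometric series. First I would take logarithms: since $u/\alpha^n \in (0, 0.5)$ for all $n \geq 0$ (as $u < 0.5$ and $\alpha > 1$), each factor $1 - u/\alpha^n$ lies in $(0.5, 1)$, so the product converges to a positive number and $\log \prod_{n=0}^\infty (1 - u/\alpha^n) = \sum_{n=0}^\infty \log(1 - u/\alpha^n)$. It therefore suffices to show $\sum_{n=0}^\infty \log(1 - u/\alpha^n) > -\frac{\alpha}{\alpha-1}$, i.e. $\sum_{n=0}^\infty -\log(1 - u/\alpha^n) < \frac{\alpha}{\alpha-1}$.

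Next I would apply the elementary inequality $-\log(1 - t) \leq 2t$ valid for $t \in [0, 1/2]$ (this follows e.g. from convexity of $-\log(1-t)$ on $[0,1/2]$ together with the endpoint values: at $t=0$ both sides are $0$, at $t=1/2$ we have $-\log(1/2) = \log 2 < 1 = 2 \cdot \tfrac12$). Applying this with $t = u/\alpha^n$, which lies in $[0,1/2]$, gives
\[
\sum_{n=0}^\infty -\log\!\left(1 - \frac{u}{\alpha^n}\right) \leq \sum_{n=0}^\infty \frac{2u}{\alpha^n} = \frac{2u}{1 - 1/\alpha} = \frac{2u\alpha}{\alpha - 1}.
\]
Since $u < 0.5$, this is strictly less than $\frac{\alpha}{\alpha - 1}$, which gives the claim after exponentiating.

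There is essentially no serious obstacle here; the only thing to be careful about is justifying the convergence of the product and the validity of interchanging logarithm and infinite product, and pinning down the exact elementary estimate $-\log(1-t) \le 2t$ on $[0,1/2]$ with the correct direction of inequality so that the final bound is strict. One could alternatively use the cruder bound $-\log(1-t) \le t/(1-t) \le 2t$ on $[0,1/2]$; either works. If a cleaner constant is desired one could instead note $1 - u/\alpha^n > e^{-2u/\alpha^n}$ directly (again from $e^{-2s} < 1 - s$ for $s \in (0,1/2)$) and multiply, avoiding logarithms altogether — this is perhaps the most transparent route and is the one I would write up.
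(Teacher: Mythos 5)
Your proof is correct and follows essentially the same route as the paper: take logarithms, bound each term via $-\log(1-t)\leqslant 2t$ on $[0,1/2]$, sum the geometric series to get $2u\alpha/(\alpha-1)$, and conclude strictly from $u<0.5$. The paper's argument is identical (it states the same inequality as $\log(1-x)>-2x$ for $x\in(0,0.5)$), so no further comment is needed.
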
 
\begin{proof}
For $x \in (0,0.5)$ we have $\log (1-x) > -2x$. Taking the logarithm of the product and applying this inequality to every term gives
\[\sum_{n=0}^{\infty} \log \left(1-\frac{u}{\alpha^n}\right) > -2u\sum_{n=0}^{\infty}\left(\frac{1}{\alpha^n}\right) = -2u\frac{\alpha}{\alpha-1}>-\frac{\alpha}{\alpha-1}.\]
\end{proof}
\subsection{The linearization theorem} We are now ready to prove the main result of this section.
\begin{theorem}[noncommutative, non-archimedean Siegel linearization]\label{Siegel}
Suppose $\vec{f}=A\vec{x}+\hat{f}(x)$ is an element of $\End^{\op}\Kr{\vec{x}}{r}$. Suppose that $\vec{f}$ is semisimple, $A$ is a diagonal matrix, and that the eigenvalues $\lambda_1, \dots, \lambda_n$ of $A$ satisfy Siegel's condition with parameters $c, \mu$. Suppose $|\lambda_i| \leqslant 1.$ Then there exists a radius $r_\infty<r<1$ and an invertible function $\vec{\psi} \in \End^\op\Kr{\vec{x}}{r_\infty}$, such that $\vec{\psi}^{-1}\circ \vec{f} \circ \vec{\psi} = A\vec{x}$. 
\end{theorem}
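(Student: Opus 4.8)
The plan is to iterate Lemma \ref{Siegel iteration} (a noncommutative, $\ell$-adic Newton-type step), taking the limit of the resulting compositions of conjugating power series. We start with $\vec f = A\vec x + \hat f$ on a disk of radius $r$. After possibly shrinking $r$ slightly (which only improves matters, since $\norm{\hat f}{r}$ is finite), we may assume $\delta_0 \colonequals \norm{\hat f}{r}$ is as small as we like; in particular small enough that the quadratic convergence below kicks in. We also note that the hypothesis $\mu > 2/7$ required by Lemma \ref{Siegel iteration} can be arranged by simply enlarging $\mu$ in Siegel's condition (enlarging $\mu$ only weakens the condition), so this is harmless.

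First I would set up the iteration. Fix $\alpha \in (1,2)$, say $\alpha = 3/2$, and put $r_m = r_0 \prod_{n=0}^{m-1}(1 - u/\alpha^n)$ for a suitable small $u \in (0, 0.5)$; by Lemma \ref{calculus2} the radii $r_m$ decrease to a positive limit $r' \geqslant r_0 e^{-\alpha/(\alpha-1)} > 0$. At stage $m$ we have $\vec f_m = A\vec x + \hat f_m \in \End^\op \Kr{\vec x}{r_m}$ with $\delta_m \colonequals \norm{\hat f_m}{r_m}$; note $\vec f_m$ is still semisimple and has the same linear part $A$ (conjugation preserves semisimplicity and fixes the linear part since the conjugating map is tangent to the identity). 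Choosing $\eta = \eta_m \colonequals u/\alpha^m$ so that $r_{m+1} = r_m(1-\eta_m)$, and checking the hypothesis $c(7\mu)^\mu \eta_m^{-\mu}\delta_m < r_m$ of Lemma \ref{Siegel iteration}, we obtain $\vec\psi_m = \vec x + O(x^2)$ converging on the disk of radius $r_{m+1}$ with $\norm{\vec\psi_m - \vec x}{r_{m+1}} < r_{m+1}$, and we set $\vec f_{m+1} = \vec\psi_m^{-1}\circ \vec f_m \circ \vec\psi_m$, so that $\delta_{m+1} \leqslant \frac{c(7\mu)^\mu}{\eta_m^\mu - \delta_m c(7\mu)^\mu}\delta_m^2$. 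The main estimate to verify is that if $\delta_0$ is chosen small enough relative to $u, \alpha, c, \mu$, then $\delta_m \to 0$ superexponentially — concretely, one shows by induction that $\delta_m \leqslant \delta_0^{(1+\beta)^m}/K$ or, more simply, that $\delta_m \alpha^{m\mu/(\text{something})}$ stays bounded and $\delta_m \to 0$; the point is that the $\eta_m^{-\mu} = (u^{-1}\alpha^m)^\mu$ blow-up is polynomial in $\alpha^m$ while $\delta_m^2$ beats it for small $\delta_0$. This forces the correction terms to vanish in the limit.

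Next I would take the limit. Set $\Psi_m = \vec\psi_0 \circ \vec\psi_1 \circ \cdots \circ \vec\psi_{m-1}$. Using Lemma \ref{Taylor} (or a direct coefficient estimate) together with $\norm{\vec\psi_m - \vec x}{r_{m+1}} \leqslant c\delta_m(1-\eta_m)(\eta_m/7\mu)^{-\mu}$, which is summable in $m$ because $\delta_m$ decays superexponentially and $\eta_m^{-\mu}$ grows only polynomially, one checks that the sequence $\Psi_m$ is Cauchy in $\End^\op \Kr{\vec x}{r'}$ (each $\Psi_m$ lands in $\Kr{\vec x}{r_m} \supseteq \Kr{\vec x}{r'}$, and on the radius-$r'$ disk the tail $\Psi_{m+1} - \Psi_m = \Psi_m \circ (\vec\psi_m - \vec x) \circ (\text{lower factors})$ is controlled). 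Let $\vec\psi = \lim_m \Psi_m$; it is tangent to the identity, hence invertible on a possibly slightly smaller disk by Lemma \ref{inversion} (its inverse is the limit of $\vec\psi_{m-1}^{-1}\circ\cdots\circ\vec\psi_0^{-1}$, whose norms one controls similarly). Finally, $\Psi_m^{-1}\circ \vec f \circ \Psi_m = \vec f_m = A\vec x + \hat f_m$ with $\norm{\hat f_m}{r_m} = \delta_m \to 0$, so passing to the limit gives $\vec\psi^{-1}\circ \vec f \circ \vec\psi = A\vec x$ as elements of $\Kr{\vec x}{r'}$ (convergence of $\Psi_m \to \vec\psi$ and $\Psi_m^{-1}\to \vec\psi^{-1}$ in the $r'$-norm implies convergence of the triple composition, since composition is continuous on bounded sets in these Banach algebras).

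The main obstacle I expect is the bookkeeping of the simultaneous estimates: one must choose $u$ (hence the $\eta_m$) and the initial bound on $\delta_0$ so that (i) the hypothesis $c(7\mu)^\mu\eta_m^{-\mu}\delta_m < r_m$ of Lemma \ref{Siegel iteration} holds at \emph{every} stage, not just the first; (ii) the recursion $\delta_{m+1} \leqslant \frac{c(7\mu)^\mu}{\eta_m^\mu - \delta_m c(7\mu)^\mu}\delta_m^2$ actually produces superexponential decay despite the denominator degrading as $\eta_m \to 0$; and (iii) the resulting bounds on $\norm{\vec\psi_m - \vec x}{r_{m+1}}$ are summable so the infinite composition converges in a disk of positive radius. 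All three are simultaneously satisfiable because the quadratic term $\delta_m^2$ dominates any fixed polynomial loss in $\alpha^m$ once $\delta_0$ is small; making this precise is the one genuinely technical point, and it is exactly the classical Newton/KAM scheme adapted to the non-Archimedean setting, where the ultrametric inequality actually makes the composition estimates cleaner than in the Archimedean case. A secondary point to be careful about is that shrinking $r$ at the start to make $\delta_0$ small is legitimate only because we are allowed to output a smaller $r'$; the theorem statement permits this.
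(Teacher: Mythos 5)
Your proposal is correct and follows essentially the same route as the paper: iterate Lemma \ref{Siegel iteration}, control the shrinking radii with Lemma \ref{calculus2}, and pass to the limit of the composed conjugations via Lemmas \ref{Taylor} and \ref{inversion}. The only (immaterial) difference is bookkeeping: you fix $\eta_m = u/\alpha^m$ in advance and extract superexponential decay of $\delta_m$ from the quadratic recursion, whereas the paper couples $\eta_n$ to $\delta_n$ via $\eta_n^{\mu+1} = Bc(7\mu)^{\mu}\delta_n$ and settles for geometric decay $\delta_{n+1}\leqslant \delta_n/(B-1)$.
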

\begin{proof}
The idea is to apply Lemma \ref{Siegel iteration} iteratively, at every step conjugating $\vec{f}$ closer and closer to its linear part. At each step we are given a conjugate $\vec{f_n}$ of $\vec{f}$ on a disc of radius $r_n$ with $\norm{\vec{f}_n - A\vec x}{r_n}=\delta_n<1$, and we have to choose a suitable value of $\eta_n$ to apply Lemma \ref{Siegel iteration} and obtain a new conjugate $\vec f_{n+1}$ on the disk of radius $r_{n+1}\colonequals r_n(1-\eta_n)$. We make these choices so that the distance between $\vec f_n$ and its linear part tends to zero with $n$, and the radii of the relevant disks remains bounded away from zero. On each step we make the choice $\eta_n\approx \delta_n^{1/(\mu+1)}$, then the numerics of Lemma \ref{Siegel iteration} show that $\delta_{n+1} \lessapprox \delta_n^2/\eta_n^\mu \approx \delta_n^{1+\frac{1}{\mu + 1}}$, so that the sequence $\delta_n$ converges to zero super-exponentially. Since $\eta_n\approx \delta_n^{1/(\mu+1)}$, the sequence $\eta_n$ will converge to zero super-exponentially as well, and so $r \prod_n(1-\eta_n)>0$; by taking limits we will show that $\vec{f}$ is conjugate to $A\vec{x}$ on the disk of radius $r \prod_n(1-\eta_n)$.

After increasing $\mu$ if necessary we can assume $\mu>2/7$ (this is a condition necessary to apply Lemma \ref{Siegel iteration}). Before starting the iterative process, we need to (possibly) shrink the disk to make the norm of $\vec f-A\vec x$ small enough so that on every step of the iteration Lemma \ref{Siegel iteration} can be applied. 

Let $\delta$ denote $\norm{\vec{f}-A\vec x}{r}$. We choose a large positive constant $B$ (to be determined later) and let $r_1 \colonequals \frac{r}{B}$. Then since $\vec{f}-A\vec x$ has no linear term, the norm $\delta_1\colonequals \norm{\vec{f}-A\vec x}{r_1}$ satisfies $\delta_1 \leqslant \frac{\delta}{B^2}$. We now choose $B$ large such that the following inequality holds 
\begin{equation}\label{Bparameter}
c^{-1}(7\mu)^\mu\delta_1 3^{\mu} <\frac{1}{2}r_1e^{-\frac{2^{1/(\mu+1)}}{2^{1/(\mu+1)}-1}},
\end{equation}
which is possible as $\delta_1$ scales as $O(1/B^2),$ while $r_1$ scales as $\sim 1/B$.




We will produce a sequence of constants $\eta_n, \delta_n, r_n$ and elements $\vec{\psi}_n\in \End^\op \Kr{\vec x}{r_{n+1}}, \vec{f}_n\in \End^\op\Kr{\vec x}{r_{n}}$, with $\vec\psi_n$ invertible, and such that $\vec{f}_1=\vec{f} \in \End^{\op}\Kr{\vec{x}}{r_1}$, $\delta_1=\norm{\vec{f}-A\vec{x}}{r_1}$, $\eta_1=1/3$. We will have $$\vec{f}_{n+1}=\vec{\psi}_n^{-1} \circ \vec{f}_n \circ  \vec{\psi}_{n},$$ $$r_n=(1-\eta_{n-1})r_{n-1},$$ $$\delta_n \colonequals \Vert \vec{f}_n - A\vec{x}\Vert_{r_n} < \delta_{n-1}/2,$$ $$\norm{\vec{\psi}_n-\vec{x}}{r_{n+1}}<c^{-1}\delta_n (1-\eta_n)(7\mu)^\mu \eta_n^{-\mu},$$

and $$\eta_n^{\mu+1}=\delta_n/(3^{\mu+1}\delta_1) \in (0,1).$$ Indeed,  iteratively apply Lemma \ref{Siegel iteration} to $\vec{f}_n, r_n, \eta_n$. For the lemma to be applicable we need to check two conditions: $\eta_n \in (0,1)$ and $c^{-1}(7\mu)^\mu \eta_n^{-\mu}\delta_n<r_n$. The first condition follows since $\delta_n<\delta_1$ and so $\eta_n = \left(\delta_n/(3^{\mu+1}\delta_1)\right)^{1/(\mu+1)} < 1/3.$ For the second condition, we have \[\eta_n^{-\mu} \delta_n = \left(\frac{\delta_n}{3^{\mu + 1}\delta_1}\right)^{-\mu/(\mu+1)} \delta_n=\delta_n^{1/(\mu+1)}(3^{\mu+1}\delta_1)^{\mu/(\mu+1)}<\delta_1^{1/(\mu+1)}3^\mu\delta_1^{\mu/(\mu+1)}=\delta_1 3^{\mu}.\] Thus it is enough to show \[c^{-1}(7\mu)^\mu\delta_1 3^{\mu} < r_n.\]
We estimate $r_n$ from below using the estimate $\delta_n < \delta_12^{-(n-1)}$
\begin{multline}\label{r-infinity-bound}
    r_n = r_1 \prod_{i=1}^{n-1} (1 - \eta_i)=r_1 \prod_{i=1}^{n-1}\left(1-\left(\frac{\delta_i}{3^{\mu + 1}\delta_1}\right)^{1/(\mu+1)}\right)>r_1 \prod_{i=1}^{n-1} \left(1-\frac{1}{3}2^{-(i-1)/\mu+1}\right)\\
    =r_1 \prod_{i=0}^{n-1}\left(1-\frac{1}{3}2^{-i/\mu+1}\right)>r_1 \prod_{i=0}^{\infty}\left(1-\frac{1}{3}2^{-i/\mu+1}\right) \overset{\text{by Lemma \ref{calculus2}}}{>}r_1 e^{-\frac{2^{1/(\mu+1)}}{2^{1/(\mu+1)}-1}}
\end{multline}
Therefore using inequality (\ref{Bparameter}) and the previous estimate we get \[c^{-1}(7\mu)^\mu \delta_1 3^{\mu}  <\frac{1}{2} r_1 e^{-\frac{2^{1/(\mu+1)}}{2^{1/(\mu+1)}-1}} < r_n . \]


Thus, we can apply Lemma \ref{Siegel iteration} with $\vec{f}=\vec f_n, r=r_n, \eta=\eta_n$ to produce $\vec{\psi}_{n}$ and $\vec f_{n+1}=\vec{\psi}_n^{-1} \circ \vec{f}_n \circ \vec{\psi}_n$ such that 
$$\norm{\vec{\psi}_n-\vec{x}}{r_{n+1}}<c^{-1}\delta_n (1-\eta_n)(7\mu)^\mu \eta_n^{-\mu}$$ and 

$$\delta_{n+1} < c^{-1} (7\mu)^\mu \frac{\delta_n^2}{\eta_n^{\mu} r_n} = c^{-1}(7\mu)^\mu \delta_n^{1/(\mu+1)} \delta_1^{\mu/(\mu+1) }3^\mu r_n^{-1} \delta_n < c^{-1} (7\mu)^\mu \delta_1 3^\mu r_n^{-1} \delta_n $$

$$\overset{\text{By (\ref{r-infinity-bound})}}{<} c^{-1} (7\mu)^\mu \delta_1 3^\mu r_1^{-1} e^{\frac{2^{1/(\mu+1)}}{2^{1/(\mu+1)}-1}} \delta_n \overset{\text{By (\ref{Bparameter})}}{<} \frac{1}{2} \delta_n$$

Thus the infinite sequence $\vec{f}_n$ can be constructed as claimed.

The sequence $\delta_n$ converges to zero (at least) exponentially. Since $\eta_n=\frac{1}{3}(\delta_n/\delta_1)^{1/(\mu+1)}$, the product $\prod_n (1-\eta_n)$ converges. Let \[r_\infty\colonequals r_1 \prod_n(1-\eta_n).\] By Lemma \ref{fixed-Endop}(\ref{schwarz}), we have $\norm{\vec f_n-A\vec x}{r_\infty}<\delta_n$, and so the sequence of conjugates $\vec{f}_n$ converges to the function $A\vec{x}$ on the disk of radius $r_\infty$. 

We now show that the limit of $\vec{f}_n$ is also a conjugate of $\vec{f}$. Let $\vec{\Psi}_n=\vec{\psi}_1 \circ \dots \circ \vec{\psi}_n$. Then $\vec{\Psi}_n\in \End^\op \Kr{\vec x}{r_\infty}$ is invertible, and $\vec{\Psi}_n^{-1} \circ \vec{f} \circ \vec{\Psi}_n = \vec{f}_n$. By construction, \[\norm{\vec{\psi}_n-\vec{x}}{r_\infty}<c^{-1}\delta_n (1-\eta_n)(7\mu)^\mu \eta_n^{-\mu}= 3^{\mu+1}c^{-1}(7\mu)^\mu(1-\eta_n)\delta_1 \eta_n < 3^{\mu+1}c^{-1}(7\mu)^\mu \delta_1 \eta_n = Q \eta_n,\] where $Q>0$ does not depend on $n$. We have by Lemma \ref{Taylor} $\norm{\vec{\Psi}_{n+1}-\vec{\Psi}_n}{r_\infty}= \norm{\vec{\Psi}_n(\vec{\psi}_{n+1}(x))-\vec{\Psi}_n(x)}{r_\infty} < \frac{1}{r_\infty} \norm{\vec{\Psi}_n}{r_\infty} \norm{\vec{\psi}_{n+1}-\vec{x}}{r_\infty} <  \frac{Q}{r_\infty} \norm{\vec{\Psi}_n}{r_\infty}  \eta_{n+1}$. Since $\eta_n$ converges to zero, the sequence $\vec{\Psi}_n$ is Cauchy, and thus has a limit $\vec{\Psi}$. Since $\vec{f}_n = \vec{\Psi}_n^{ -1} \circ \vec{f} \circ \vec{\Psi}_n$, we have $\vec{\Psi}^{ -1} \circ \vec{f} \circ \vec{\Psi} = A\vec{x}$, using continuity of composition (Lemma \ref{fixed-Endop}(\ref{composition-inequality})).
\end{proof}

\begin{corollary}\label{cor:Siegel-app}
Suppose $F\in \End^{\op}\Kr{\vec{x}}{r}$ is a semisimple endomorphism. Suppose the eigenvalues $\lambda_i$ of the action of $F$ on $\mathscr{I}/\mathscr{I}^2$ are elements of $K$ with $|\lambda_i|\leqslant 1$ that satisfy Siegel's condition with parameter $\mu$. Then there exists a radius $r'<r$ and a collection of elements $y_1, \dots, y_n \in \Kr{\vec{x}}{r'}$ with the following two properties
\begin{enumerate}
    \item If $\lambda_1, \dots, \lambda_n \in \overline{\Q_\ell}$ are eigenvalues of $F$ on $\mathscr{I}/\mathscr{I}^2$ then $Fy_i=\lambda_i y_i$;
    \item For any integer $m$ the monomials in $y_i$ of degree at least $m$ have dense span in $\mathscr{I}_{r'}^m$.
    \end{enumerate}
\end{corollary}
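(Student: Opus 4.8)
The plan is to deduce Corollary \ref{cor:Siegel-app} from Theorem \ref{Siegel} by first upgrading the linear action of $F$ on $\mathscr{I}/\mathscr{I}^2$ to a genuine diagonalization statement, and then transporting the monomial-density property through the conjugating automorphism. First I would choose coordinates on the noncommutative polydisk adapted to $F$: since $F$ acts semisimply on $\mathscr{I}/\mathscr{I}^2$ with eigenvalues $\lambda_1,\dots,\lambda_n\in K$, after enlarging $K$ if necessary and performing a linear change of the topological generators $x_1,\dots,x_n$, I may assume the induced action of $F$ on $\mathscr{I}/\mathscr{I}^2$ is $x_i\mapsto \lambda_i x_i$. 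Writing $\vec{f}\in\End^{\op}\Kr{\vec{x}}{r}$ for the tuple $(F(x_1),\dots,F(x_n))$, this says exactly $\vec{f}=A\vec{x}+\hat f$ with $A=\operatorname{diag}(\lambda_i)$ and $\hat f=O(x^2)$. The hypotheses of Theorem \ref{Siegel} are now in place: $\vec f$ is semisimple (Definition \ref{def-semisimple}), $A$ is diagonal, $|\lambda_i|\le 1$ since $F$ preserves $\Kr{\vec{x}}{r}$ and hence the integral structure (so the eigenvalues of a semisimple operator on the reduction have norm $\le 1$), and the $\lambda_i$ satisfy Siegel's condition with parameter $\mu$ by hypothesis. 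Theorem \ref{Siegel} then yields a radius $r'<r$ and an invertible $\vec\psi\in\End^{\op}\Kr{\vec{x}}{r'}$ with $\vec\psi^{\circ-1}\circ\vec f\circ\vec\psi=A\vec{x}$.

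Next I would translate the conjugacy of endomorphisms back into a statement about the operator $P\mapsto P\circ\vec{f}$ on the algebra. The automorphism $\vec\psi$ of the polydisk induces a topological $K$-algebra automorphism $\Psi^\ast$ of $\Kr{\vec{x}}{r'}$, sending $x_i\mapsto\psi_i$; the relation $\vec\psi^{\circ-1}\circ\vec f\circ\vec\psi=A\vec{x}$ says precisely that $\Psi^\ast$ intertwines the operator $F\colon P\mapsto P\circ\vec f$ with the operator $A^\ast\colon P\mapsto P(A\vec{x})$, i.e.\ $F=\Psi^\ast\circ A^\ast\circ(\Psi^\ast)^{-1}$. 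Now $A^\ast$ acts on the monomial $x^I$ (with $I=(i_1,\dots,i_m)$) by the scalar $\vec\lambda^I=\lambda_{i_1}\cdots\lambda_{i_m}$. Setting $y_i\colonequals\Psi^\ast(x_i)=\psi_i\in\mathscr{I}_{r'}$, the element $y^I\colonequals y_{i_1}\cdots y_{i_m}=\Psi^\ast(x^I)$ is an eigenvector of $F$ with eigenvalue $\vec\lambda^I$; in particular $Fy_i=\lambda_i y_i$, giving property (1). (One should note $\Psi^\ast$ is an algebra homomorphism, so products of the $y_i$ are the $\Psi^\ast$-images of the corresponding monomials in the $x_i$, which is what makes the eigenvalue bookkeeping work.)

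For property (2), the point is that $\Psi^\ast$ is a topological automorphism of $\Kr{\vec{x}}{r'}$ carrying $\mathscr{I}_{r'}$ into itself: since $\vec\psi=\vec{x}+O(x^2)$ with $\norm{\vec\psi-\vec{x}}{r'}<r'$, Lemma \ref{inversion} gives that $\vec\psi$ is invertible with inverse again of the form $\vec{x}+O(x^2)$, so $\Psi^\ast$ is bicontinuous and $\Psi^\ast(\mathscr{I}_{r'}^m)=\mathscr{I}_{r'}^m$ for every $m$ (the filtration by powers of the augmentation ideal is preserved because $\vec\psi$ has no constant term and its linear part is invertible — indeed it is the identity). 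The monomials $x^I$ with $|I|\ge m$ have dense span in $\mathscr{I}_{r'}^m$ by the very definition of $\Kr{\vec{x}}{r'}$ (finite truncations are dense in the $r'$-norm). Applying the bicontinuous automorphism $\Psi^\ast$, the monomials $y^I=\Psi^\ast(x^I)$ with $|I|\ge m$ have dense span in $\Psi^\ast(\mathscr{I}_{r'}^m)=\mathscr{I}_{r'}^m$, which is property (2).

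The main obstacle I anticipate is the bookkeeping in the first paragraph: justifying that one may diagonalize $A$ over $K$ (or a controlled finite extension) by a \emph{linear} change of coordinates that preserves the convergent subalgebra $\Kr{\vec{x}}{r}$ and its norm up to a harmless rescaling of $r$, and confirming that semisimplicity in the sense of Definition \ref{def-semisimple} is unaffected by such a change (this is flagged in the remark after that definition, but the multivariate and eigenvalue-in-$K$ hypotheses need to be matched up carefully). The rest is a formal transport of structure along the conjugacy furnished by Theorem \ref{Siegel}, using Lemma \ref{inversion} to control the inverse and the definition of the norm for density.
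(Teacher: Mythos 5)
Your overall route is the same as the paper's: diagonalize the linear part by a linear change of coordinates after shrinking $r$, invoke Theorem \ref{Siegel} to get $\vec{\psi}$ with $\vec{\psi}^{-1}\circ F\circ\vec{\psi}=A\vec{x}$, and then transport the monomials. However, there is a genuine error in the middle step: you have the direction of the conjugation on the algebra backwards, and as a consequence you define the $y_i$ to be the \emph{wrong} elements. Writing $e_{\vec{g}}$ for the algebra endomorphism $P\mapsto P\circ\vec{g}$, the anti-equivalence built into $\End^{\op}$ gives $e_{\vec{a}\circ\vec{b}}=e_{\vec{b}}\circ e_{\vec{a}}$, so the identity $\vec{\psi}^{-1}\circ F\circ\vec{\psi}=A\vec{x}$ translates to $\Psi^\ast\circ e_F\circ(\Psi^\ast)^{-1}=A^\ast$, i.e.
\[
e_F=(\Psi^\ast)^{-1}\circ A^\ast\circ\Psi^\ast,
\]
not $e_F=\Psi^\ast\circ A^\ast\circ(\Psi^\ast)^{-1}$ as you assert. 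Hence the eigenvectors of $e_F$ are $(\Psi^\ast)^{-1}(x^I)$, i.e.\ products of the \emph{components of $\vec{\psi}^{-1}$}, and one must set $y_i=(\vec{\psi}^{-1})_i$ (as the paper does), not $y_i=\psi_i$. With your choice, $Fy_i=\psi_i(\vec{f})=(\vec{\psi}\circ\vec{f})_i$, and $\vec{\psi}\circ\vec{f}=(A\vec{x})\circ\vec{\psi}^{-1}\circ\vec{\psi}^{-1}\circ\vec{\psi}\circ\vec{\psi}$ does not simplify to $A\vec{\psi}$; already in one commuting variable, if $f=\psi\circ(\lambda x)\circ\psi^{-1}$ then $\psi^{-1}(f(x))=\lambda\psi^{-1}(x)$ is an eigenvector while $\psi(f(x))$ is not. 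So property (1) fails for your $y_i$.

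The good news is that the fix is purely local: replace $\Psi^\ast$ by $(\Psi^\ast)^{-1}$ throughout your second and third paragraphs. Lemma \ref{inversion} guarantees $\vec{\psi}^{-1}=\vec{x}+O(x^2)$ with controlled norm on the disk of radius $r'$, so $(\Psi^\ast)^{-1}$ is again a bicontinuous algebra automorphism preserving the augmentation filtration, and your density argument for property (2) goes through verbatim with the corrected $y_i$. (The paper phrases the density step slightly differently --- each $x_i'=\psi_i(\vec{y})$ is a convergent sum of monomials of degree $\geqslant 1$ in the $y_j$ --- but your automorphism formulation is equivalent once the direction is corrected.) Your remark that $|\lambda_i|\leqslant 1$ needs justification is a fair point that the paper's proof also elides; in the application the $\lambda_i$ are Frobenius eigenvalues on $H^1$, hence algebraic integers, so this is harmless there.
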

\begin{proof}
After replacing $r$ with a smaller radius $\tilde{r}$ we can do a linear change of variables $\vec{x'}=M\vec{x}$ to make the action of $F$ on $\mathscr{I}_{\tilde{r}} / {\mathscr{I}_{\tilde{r}}}^2$ diagonal in the basis $\vec{x}'$. We can therefore assume that $F=A\vec{x}' + O(x^2)$, where $A$ is a diagonal matrix $A=\mathrm{diag}(\lambda_1, \dots, \lambda_n)$. We can now apply Theorem \ref{Siegel}: there exists a radius $r'<\tilde{r}$ and $\vec{\psi} \in \End^{\op}\Kr{\vec{x}'}{r'}$ such that $\vec{\psi}^{-1}\circ F \circ \vec{\psi} = A\vec{x}'$. Let $y_i=\psi_i^{-1}(\vec{x}')$, then $Fy_i=y_i(F(\vec{x}'))=A\vec{y}$. The span of the monomials in the  $x_i'$ of degree $m$ or larger is dense in $\mathscr{I}_{r'}^m$ by Lemma \ref{lemma:dense-monomials}. Since $\vec{x}'= \vec{\psi}(\vec{y})$, any monomial in $x_i'$ of degree $m$ is a (convergent) sum of monomials in $y_i$ of degree at least $m$. Hence monomials in $y_i$ of degree $m$ or larger have dense span in $\mathscr{I}_{r'}^m$.
\end{proof}

\section{Main theorem}                      
Having proven Theorem \ref{Siegel} and Corollary \ref{cor:Siegel-app}, we move on to the proof of Theorem \ref{main theorem}, which is a straightforward application.

\begin{proof}[Proof of Theorem \ref{main theorem} (Compare to {\cite[Proof of Theorem 1.2]{litt-inventiones})}]
We first observe that it suffices to consider the case where $X$ is affine and $k$ is finite. Indeed, we may take $X$ to be affine by deleting any closed point of $X$, which does not affect the hypotheses of the theorem. To see that we may reduce to the case where $k$ is finite, choose a finitely-generated integral $\mathbb{Z}$-algebra $R$ in which $\ell$ is invertible, a smooth proper $R$-curve $\overline{\mathscr{X}}$, a divisor $D$ in $\overline{\mathscr{X}}$ \'etale over $R$, and an isomorphism $\on{Frac}(R)\overset{\sim}{\to} k$ such that $(\overline{\mathscr{X}}\setminus D)_k$ is isomorphic to $X$. Now for any geometric point $\bar p$ lying over a closed point $p\in \on{Spec}(R)$ with residue field $k(p)$ of characteristic prime to $\ell$, the specialization map $$\pi_1^{\ell}(X_{\bar k})\to \pi_1^\ell((\overline{\mathscr{X}}\setminus D)_{\bar p})$$ is an isomorphism. Moreover, any semisimple arithmetic representation of $\pi_1^{\ell}(X_{\bar k})$ remains semisimple arithmetic when viewed as a representation of $\pi_1^\ell((\overline{\mathscr{X}}\setminus D)_{\bar p})$, by the argument of \cite[Proof of Theorem 1.1.3, Step 2]{litt-duke}. Thus it suffices to prove the theorem for $(\overline{\mathscr{X}}\setminus D)_p$, which is by construction a smooth affine curve over a finite field.

For the rest of the argument we assume $k$ is finite of characteristic different from $\ell$, and $X/k$ is a smooth affine curve. Let $\bar k$ be an algebraic closure of $k$. We may, after replacing $k$ with a finite extension, assume that $X$ has a $k$-rational point $x$; we let $\bar x$ be the geometric point obtained from $x$ via our choice of algebraic closure $\bar k$ of $k$. In this case $\pi_1^\ell(X_{\bar k}, \bar x)$ is a free pro-$\ell$ group, and hence $\mathbb{Z}_\ell\Laur{\pi_1^\ell(X_{\bar k}, \bar x)}$ is (non-canonically) isomorphic to a noncommutative power series ring over $\mathbb{Z}_\ell$; fix such an isomorphism. As $\bar x$ was obtained from a rational point of $X$, the absolute Galois group of $k$ acts naturally on $\mathbb{Z}_\ell\Laur{\pi_1^\ell(X_{\bar k}, \bar x)}$.

Let $F$ be the Frobenius element in the absolute Galois group of $k$. Consider the action of $F$ on $\mathbb{Q}_\ell\Laur{\pi_1^{\ell}(X_{\bar k}, \bar{x})}$. By \cite[Theorem 2.20]{litt-inventiones} the action of $F$ is semisimple in the sense of Definition \ref{def-semisimple}. By the Weil conjectures for curves, the eigenvalues $\lambda_1, \dots, \lambda_n$ of the action of $F$ on $\mathscr{I}/\mathscr{I}^2 = H^1(X_{\bar{k}}, \overline{\Q_\ell})^\vee$ \cite[Proposition 2.4]{litt-inventiones} are $q$-Weil numbers of weights $-1$ and $-2$. In particular they are algebraic and therefore by Proposition \ref{prop:baker} they satisfy Siegel's condition for some parameters $c,\mu$. Let $K/\Q_\ell$ be a finite extension that contains all $\lambda_i$. By Theorem \ref{Siegel} and Corollary \ref{cor:Siegel-app} there exists a radius $r$ such that the ideal $\mathscr{I}_r^n\subset\Kr{\pi_1^{\et}(X_{\kbar}, \bar{x})}{r}$ is (topologically) spanned by $F$-eigenvectors for all $n\geqslant 0$. These eigenvectors are monomials in the elements $y_1, \cdots, y_n\in \mathscr{I}_r$ provided by Corollary \ref{cor:Siegel-app}, where the eigenvalue $\lambda_i$ corresponding to $y_i$ also appears as an eigenvalue of the Frobenius action on $H^1(X_{\bar k}, K)^\vee$, and hence is a $q$-Weil number of weight $-1$ or $-2$.  In particular $\mathscr{I}_r^n$ is topologically spanned by $F$-eigenvectors whose corresponding eigenvalues are $q$-Weil numbers of weight $\leqslant-n$.

Let $N(X,\ell)$ be an arbitrary real number strictly larger than $-\log(r)/\log(\ell) \in \R.$  Now suppose $\rho$ is a semisimple arithmetic representation trivial modulo $\ell^{N(X, \ell)}$. There exists a finite extension $k'/k$ such that $\rho$ extends to a representation $\rho': \pi_1^{\et}(X_{k'}, \bar{x}) \to \GL_n(\overline{\Z_\ell})$ of the arithmetic fundamental group of $X_{k'}$. There exists an integer $m$ such that $F^m$ lifts to an element of $\pi_1^{\et}(X_{k'}, \bar{x})$. Let $A \in \GL_n(\overline{\Z_\ell})$ be the matrix $A\colonequals \rho'(F^m)$. Since the representation $\rho$ is trivial modulo $\ell^{N(X, \ell)}>1/r$ it extends to a continuous representation $$\hat{\rho}:\Kr{\pi_1^{\ell}(X_{\kbar}, \bar{x})}{r} \to \on{Mat}_{n\times n}(\overline{\Q_\ell})$$ by Lemma \ref{extends-to-Kr}. Moreover, by arithmeticity, $\hat{\rho}(F^m(g ))=A\hat{\rho}(g)A^{-1}$ for every $g \in \Kr{\pi_1^{\et}(X_{\kbar}, \bar{x})}{r}.$ Let $w'$ denote the the most negative weight of an eigenvalue of the conjugation action of $A$ on $\mathrm{Mat}_{n \times n}(\overline{\Q_\ell}),$ if any such exist, and $0$ otherwise. Let $w=\max(-w', 0).$ Every monomial $Y \in \mathscr{I}_r^{w+1}$ in the $y_i$ satisfies $A\hat\rho(Y)A^{-1}=u \hat\rho(Y)$ for a $q$-Weil number $u$ of weight less than $-w$. Since no such numbers are eigenvalues of the conjugation action of $A$ on $\mathrm{Mat}_{n \times n}(\overline{\Q_\ell}),$ the image of every monomial in $\mathscr{I}_r^{w+1}$ under $\hat{\rho}$ is zero. As such monomials topologically span $\mathscr{I}_r^{w+1}$ by Corollary \ref{cor:Siegel-app}, we have $\hat{\rho}(\mathscr{I}_r^{w+1})=0$, and hence that $\rho(\mathscr{I}_r^{w+1}\cap \mathbb{Z}_\ell\Laur{\pi_1^\ell(X_{\bar k}, \bar x)})=0$. But $\mathscr{I}_r^{w+1}\cap \mathbb{Z}_\ell\Laur{\pi_1^\ell(X_{\bar k}, \bar x)}$ is $\mathscr{I}^{w+1}$, where $\mathscr{I}$ is the augmentation ideal of $\mathbb{Z}_\ell\Laur{\pi_1^\ell(X_{\bar k}, \bar x)}$. Therefore, $\rho$ is unipotent. But a unipotent semisimple representation is trivial.
\end{proof}
\begin{remark}\label{rmk:proper-diagonalization}
In the course of the proof, we show that for $X$ a smooth affine curve over a finite field $k$, and $\ell$ a prime different from the characteristic of $k$, there exists a finite extension $K$ of $\mathbb{Q}_\ell$ and an $r>0$ such that the Banach algebra $\Kr{\pi_1^{\ell}(X_{\bar k}, \bar x)}{r}$ is topologically spanned by Frobenius eigenvectors. In fact the same statement for smooth proper curves follows immediately, as if $X$ is smooth and proper and $y\in X$ is a closed point, the map $\pi_1^{\ell}((X\setminus y)_{\bar k})\to \pi_1^\ell(X_{\bar k})$ is surjective.
\end{remark}
\begin{proof}[Proof of Corollary \ref{cor:abelian-variety}]
As in the statement, we let $X/k$ be a curve over a finitely-generated field, and $A/X_{\bar k}$ an Abelian scheme. Suppose that $A_\eta$ had full $\ell^M$-torsion. Then the natural geometric monodromy representation $$\pi_1^{\et}(X_{\bar k}, \bar x)\to \GL(T_\ell(A_{\bar x}))$$ is trivial mod $\ell^M$ for any geometric point $\bar x$ of $X_{\bar k}$. As this representation is semisimple arithmetic (as are all representations arising from geometry---semisimplicity follows from \cite[3.4.1(iii)]{weilii}, and arithmeticity by spreading out), Theorem \ref{main theorem} implies that it is trivial, and in particular every $\ell$-power torsion point of $A_\eta$ is rational. Thus by the Lang-N\'{e}ron theorem \cite[Theorem 2.1]{lang-neron}, the natural map $$\tau:\on{Tr}_{\eta/\bar k}(A_\eta)_\eta\to A_\eta$$ had image containing all the $\ell$-power torsion points of $A_\eta$. As the $\ell$-power torsion is Zariski-dense, this implies that $\tau$ is surjective and hence an isogeny for dimension reasons, proving the statement.
\end{proof}
\section{Remarks and extensions}\label{sec:remarks}
\subsection{A suggestive correspondence}
Let $X$ be a smooth proper curve over a finite field $k$, and $\ell$ a prime different from the characteristic of $k$. Let $x\in X(k)$ be a rational point, and $\bar x$ the geometric point of $X$ associated to $x$ by a choice of algebraic closure of $k$. Let $N=N(X,\ell)$ be as in Theorem \ref{main theorem}. One consequence of Theorem \ref{Siegel} is a Galois-equivariant description of the category of lisse $\overline{\mathbb{Q}_\ell}$-sheaves on $X_{\bar k}$ admitting lattices which are trivial mod $\ell^N$ in terms of linear algebra data. We view this as a (very weak) $\ell$-adic analogue of non-abelian Hodge theory.
\begin{definition}
Let $\mathscr{H}_\ell(X)$ be the category whose objects consist of pairs $$(V, \theta: V\to V\otimes H^1(X_{\bar k}, \overline{\mathbb{Q}_\ell})),$$ where $V$ is a finite-dimensional $\overline{\mathbb{Q}_\ell}$-vector space and $\theta$ is a linear map. A morphism between $(V, \theta)$ and $(V', \theta')$ is a linear map $f: V\to V'$ so that the diagram
$$\xymatrix{
V \ar[r]^-\theta \ar[d]^f & V\otimes H^1(X_{\bar k}, \overline{\mathbb{Q}_\ell})\ar[d]^{f\otimes \on{id}}\\
V' \ar[r]^-{\theta'} & V'\otimes H^1(X_{\bar k}, \overline{\mathbb{Q}_\ell})
}$$
commutes.
\end{definition}
Let $\on{Sh}_{\ell, N}(X_{\bar k})$ be the full subcategory of the category of continuous representations  $$\rho: \pi_1^{\ell}(X_{\bar k}, \bar x)\to \GL(V),$$ where $V$ is a finite-dimensional $\overline{\mathbb{Q}_\ell}$-vector space, such that there exists a $\overline{\mathbb{Z}_\ell}$-sublattice $W$ of $V$, stable under the action of $\pi_1^{\ell}(X_{\bar k}, \bar x)$, and such that $\pi_1^{\ell}(X_{\bar k}, \bar x)$ acts on $W/(\ell^N)W$ trivially. We now construct a functor $$H: \text{Sh}_{\ell, N}(X_{\bar k})\to \mathscr{H}_\ell(X).$$ Let $K, r$ be as in the proof of Theorem \ref{main theorem} and Remark \ref{rmk:proper-diagonalization}, so that Frobenius acts diagonalizably on $\Kr{\pi_1^{\ell}(X_{\kbar}, \bar{x})}{r}$. Letting $\mathscr{I}_r\subset \Kr{\pi_1^{\ell}(X_{\kbar}, \bar{x})}{r}$ be the augmentation ideal, note that the natural map $$\mathscr{I}_r\to \mathscr{I}_r/\mathscr{I}_r^2\simeq H^1(X_{\bar k}, K)^\vee$$ admits a unique Frobenius-equivariant splitting, given by the span of the weight $-1$ Frobenius-eigenvectors. Thus the span of the weight $-1$ eigenvectors yields a copy of $H^1(X_{\bar k}, K)^\vee$ inside of $\Kr{\pi_1^\ell(X_{\bar k}, \bar x)}{r}$. Now let $V$ be an object of $\text{Sh}_{\ell, N}(X_{\bar k})$. By Lemma \ref{extends-to-Kr}, we have a natural action of $\Kr{\pi_1^{\ell}(X_{\kbar}, \bar{x})}{r}$ on $V$, and thus viewing $H^1(X_{\bar k}, K)^\vee$ as subspace of $\Kr{\pi_1^{\ell}(X_{\kbar}, \bar{x})}{r},$ we obtain a natural map $$H^1(X_{\bar k}, K)^\vee\otimes_K V\to V.$$ By adjointness we thus obtain an object $$V\to V\otimes H^1(X_{\bar k}, \overline{\mathbb{Q}_\ell})$$ of $\mathscr{H}_\ell(X)$. 

This construction is evidently functorial. One can verify from the definition that the functor $H$ is fully faithful. Moreover there is a natural Frobenius action on the set of isomorphism classes of objects of $\mathscr{H}_\ell(X)$ (via the action of Frobenius on $H^1(X_{\bar k}, \overline{\mathbb{Q}_\ell})$), and $H$ induces a Frobenius-equivariant map from isomorphism classes of objects of $\text{Sh}_{\ell, N}(X_{\bar k})$ to isomorphism classes of objects of $\mathscr{H}_\ell(X)$. We can interpret Theorem \ref{main theorem} as the full faithfulness of this functor, combined with the fact that any object of $\mathscr{H}_\ell(X)$, fixed up to isomorphism by the action of Frobenius, is \emph{nilpotent}, in the sense that for $n\gg 0$, the composition $$\theta^n: V\to V\otimes H^1(X_{\bar k}. \overline{\mathbb{Q}_\ell})^{\otimes n}$$ is zero.

Using the semisimplicity of the Frobenius action on $\Qlaur{\pi_1^{\et}(X_{\kbar}, \bar{x})}$ \cite[Theorem 2.20]{litt-inventiones}, one may extend this correspondence to the case of non-proper $X$, though doing so seems to depend on some choices. It would of course be very interesting to find a variant of this construction for residually nontrivial representations.
\subsection{Residually nontrivial representations}
It is natural to ask if results similar to Theorem \ref{main theorem} hold for residually nontrivial arithmetic representations. Indeed, a version of Theorem \ref{Siegel} in the commutative setting (that is, a mild generalization of the main result of~\cite[Section 4]{herman-yoccoz}, allowing ``resonance"), with an identical proof, implies:
\begin{theorem}\label{thm:def-rings-version}
Let $X$ be a smooth curve over a finite field $k$, $\bar x$ a geometric point of $X$, and $$\overline{\rho}: \pi_1^{\et}(X, \bar x)\to \GL_n(\mathbb{F}_{\ell^r})$$ a representation which is absolutely irreducible when restricted to $\pi_1^{\et}(X_{\bar k}, \bar x)$, with $\ell$ different from the characteristic of $k$. Let $R_{\overline{\rho}}$ be the deformation ring of $\overline{\rho}|_{\pi_1^{\et}(X_{\bar k}, \bar x)}$, and let $U_{\overline{\rho}}$ be its rigid generic fiber. Let $K$ be an $\ell$-adic field with residue field $\mathbb{F}_{\ell^r}$, and let $$\rho: \pi_1^{\et}(X, \bar x)\to \GL_n(\mathscr{O}_K)$$ be a continuous lift of $\overline{\rho}$; let $[\rho]\in U_{\overline{\rho}}$ be the point corresponding to $\rho|_{\pi_1^{\et}(X_{\bar k}, \bar x)}$. If the action of Frobenius on $$H^1(X_{\bar k}, \rho\otimes\rho^\vee)$$ is semisimple, there exists an open neighborhood $V$ of $[\rho]$ in $U_{\overline{\rho}}$ such that the Frobenius action on $V$ is conjugate to a linear map.
\end{theorem}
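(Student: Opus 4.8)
The plan is to reduce the statement to the commutative analogue of Theorem \ref{Siegel}, in exactly the way that the body of the paper reduces Theorem \ref{main theorem} to Corollary \ref{cor:Siegel-app}, and then to apply the linearization theorem to the action of Frobenius on a completed deformation ring. First I would recall the structure of $R_{\overline\rho}$: since $\overline\rho|_{\pi_1^{\et}(X_{\bar k},\bar x)}$ is absolutely irreducible, the (framed-free) deformation functor is (pro-)representable, and $R_{\overline\rho}$ is a complete local Noetherian $\mathscr O_K$-algebra whose cotangent space is $H^1(X_{\bar k},\on{ad}\overline\rho)=H^1(X_{\bar k},\overline\rho\otimes\overline\rho^\vee)$; obstructions lie in $H^2$. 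Because $X_{\bar k}$ is an affine curve, $H^2(X_{\bar k},\on{ad}\overline\rho)=0$, so $R_{\overline\rho}$ is formally smooth over $\mathscr O_K$, i.e.\ (non-canonically) a power series ring $\mathscr O_K[[t_1,\dots,t_d]]$ with $d=\dim_{\F_{\ell^r}}H^1(X_{\bar k},\on{ad}\overline\rho)$. Frobenius $F\in\on{Gal}(\bar k/k)$ acts on the deformation problem (it acts on $\pi_1^{\et}(X_{\bar k},\bar x)$ since $\bar x$ comes from a rational point, hence on deformations of $\overline\rho$), giving a continuous $\mathscr O_K$-algebra automorphism of $R_{\overline\rho}$ fixing the closed point, and passing to the rigid generic fiber this is an analytic automorphism of the open polydisk $U_{\overline\rho}$ fixing $[\overline\rho]$. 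The point $[\rho]$ need not be the fixed point, so I would first translate: replace the coordinates $t_i$ by $t_i - t_i([\rho])$ so that $[\rho]$ becomes the origin; this is legitimate on a small enough affinoid neighborhood and does not change whether the $F$-action is linearizable there.

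The core step is to run the commutative, non-archimedean version of Siegel's linearization theorem — the mild generalization of \cite[Section 4]{herman-yoccoz} allowing resonances, which is precisely the commutative shadow of Theorem \ref{Siegel} and Corollary \ref{cor:Siegel-app} with the same Newton-iteration proof — applied to the germ of the Frobenius automorphism of $U_{\overline\rho}$ at $[\rho]$. To invoke it I must check the two hypotheses. Semisimplicity of the linearized action: the derivative of the Frobenius automorphism at $[\rho]$ is the Frobenius action on the tangent space $T_{[\rho]}U_{\overline\rho}=H^1(X_{\bar k},\rho\otimes\rho^\vee)\otimes\overline{\Q_\ell}$, and I would want the full operator $P\mapsto P\circ F$ on $\Kr{\vec t}{r}/\mathscr I^m$ to be semisimple for all $m$; as in the proof of Theorem \ref{main theorem}, this follows from semisimplicity on $\mathscr I/\mathscr I^2$ together with the fact that the eigenvalues are $q$-Weil numbers of the appropriate weights, so that the action on $\mathscr I^k/\mathscr I^{k+1}$ has eigenvalues that are products of $k$ such Weil numbers, hence of strictly smaller absolute value, and no resonances can occur across graded pieces — the only resonances are within a single graded piece, which is exactly the ``resonant'' case the generalized Herman–Yoccoz argument is designed to handle. (The hypothesis that the Frobenius action on $H^1(X_{\bar k},\rho\otimes\rho^\vee)$ is semisimple is imposed precisely to get semisimplicity on $\mathscr I/\mathscr I^2$; the rest is automatic.) Siegel's small-divisor condition: the eigenvalues $\lambda_i$ of Frobenius on $H^1(X_{\bar k},\rho\otimes\rho^\vee)$ are algebraic numbers (eigenvalues of Frobenius on an \'etale cohomology group), so Proposition \ref{prop:baker} — Yu's $\ell$-adic theorem on linear forms in logarithms — gives the $\ell$-Siegel condition of Definition \ref{def:l-siegel} with some parameters $c,\mu$, after enlarging $K$ to contain the $\lambda_i$ and $\mu$ if necessary.

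With both hypotheses verified, Theorem \ref{Siegel} (in its commutative form) produces a radius $r'>0$ and an analytic change of coordinates $\vec\psi$, convergent and invertible on the polydisk of radius $r'$ around $[\rho]$, conjugating the Frobenius automorphism to its linear part $A\vec t$. Taking $V$ to be the corresponding affinoid (or admissible open) neighborhood of $[\rho]$ inside $U_{\overline\rho}$ — the image of the radius-$r'$ polydisk under $\vec\psi$ — gives exactly the conclusion that the Frobenius action on $V$ is conjugate to a linear map. I expect the main obstacle to be entirely bookkeeping rather than substance: matching the abstract rigid-analytic neighborhood of $[\rho]$ in $U_{\overline\rho}$ with the concrete polydisk on which Theorem \ref{Siegel} operates (in particular checking that, after the translation sending $[\rho]$ to the origin, the Frobenius germ lies in some $\End^{\op}\Kr{\vec t}{r}$ with $A=\on{diag}(\lambda_i)$ and $|\lambda_i|\leqslant 1$ — the weight bound on the $\lambda_i$ gives $|\lambda_i|_\ell\leqslant 1$, but one may need to shrink $r$ to absorb the higher-order terms), and in correctly transporting the semisimplicity statement from the arithmetic side ($P\mapsto P\circ F$ on truncations of the completed local ring) to the hypotheses of the linearization theorem. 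None of these steps requires a new idea beyond what is already in Section 3; the theorem is, as the paper says, a more or less formal consequence of the commutative case of the linearization machinery.
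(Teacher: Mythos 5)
Your overall strategy is the same as the paper's: reduce to the commutative analogue of Theorem \ref{Siegel} (the resonant Herman--Yoccoz/R\"ussmann--Zehnder argument), recenter coordinates at $[\rho]$, and verify the two hypotheses of that theorem --- semisimplicity of the Frobenius action on the completed local ring, and the $\ell$-Siegel condition for its eigenvalues on the cotangent space. (One small point you should make explicit: $[\rho]$ \emph{is} a Frobenius-fixed point precisely because $\rho$ extends to the arithmetic fundamental group; linearization at $[\rho]$ makes no sense otherwise.) The first genuine gap is in your verification of the Siegel condition: you assert that the eigenvalues of Frobenius on $H^1(X_{\bar k},\rho\otimes\rho^\vee)$ are algebraic ``because they are eigenvalues of Frobenius on an \'etale cohomology group.'' But $\rho$ is an arbitrary continuous lift of $\overline\rho$, so $\rho\otimes\rho^\vee$ is an arbitrary lisse sheaf, and algebraicity of Frobenius eigenvalues on its cohomology is not automatic. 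The paper needs L.~Lafforgue's theorem (\cite[Corollaire VII.8]{lafforgue}) to know that $\rho\otimes\rho^\vee$ arises from geometry, hence that these eigenvalues are Weil numbers; without this input neither Proposition \ref{prop:baker} nor the weight bounds you use elsewhere are available.

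The second gap is your claim that semisimplicity of $F$ on $\mathfrak{m}/\mathfrak{m}^2$ propagates automatically to $R/\mathfrak{m}^n$ for all $n$ because ``no resonances can occur across graded pieces.'' This is false: for $X$ affine the cotangent space has Frobenius eigenvalues of weights $-1$ and $-2$, so a product of two weight $-1$ eigenvalues (living in $\mathfrak{m}^2/\mathfrak{m}^3$) has weight $-2$ and can coincide with a weight $-2$ eigenvalue of $\mathfrak{m}/\mathfrak{m}^2$; more generally the weight ranges of $\mathfrak{m}^k/\mathfrak{m}^{k+1}$ and $\mathfrak{m}^{k'}/\mathfrak{m}^{k'+1}$ overlap whenever $k'\leqslant 2k$. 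In the presence of such cross-graded resonances, an operator semisimple on each graded piece need not be semisimple on the filtered object, so a genuine argument is required; the paper supplies it by citing \cite[Theorem 5.1.8]{litt-duke}, just as the corresponding step in the proof of Theorem \ref{main theorem} rests on \cite[Theorem 2.20]{litt-inventiones} rather than on a formal weight count. Finally, your appeal to $H^2(X_{\bar k},\rho\otimes\rho^\vee)=0$ to conclude that $R_{\overline\rho}$ is a power series ring only applies to affine $X$, whereas the theorem as stated also allows $X$ proper.
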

In dynamics, a neighborhood $V$ as above is referred to as a \slantsf{Siegel disk}. Note that the hypothesis on the semisimplicity of the Frobenius action on $H^1(X_{\bar k}, \rho\otimes\rho^\vee)$ would follow for all $\rho$ from the Tate conjecture, by L.~Lafforgue's work on the Langlands program: \cite[Corollaire VII.8]{lafforgue} implies that $\rho\otimes \rho^\vee$ ``arises from geometry," whence the Tate conjecture would imply that the Frobenius action on its cohomology groups is semisimple.

As a corollary of Theorem \ref{thm:def-rings-version}, one obtains that for $\rho$ as in the theorem statement, there exists a neighborhood of $[\rho]$ containing no Frobenius-periodic points aside from $[\rho]$ (that is, no arithmetic representations). This is proven unconditionally in \cite[Theorem 1.1.3]{litt-duke}. That said, it would in our view be quite interesting to understand Siegel disks in $U_{\bar\rho}$; for example, if $U_{\bar\rho}$ was covered by Siegel disks for iterates of Frobenius, the Hard Lefschetz theorem would follow for all lifts of $\bar\rho$, by the strategy of \cite{esnault-kerz}.

\begin{proof}[Sketch proof of Theorem \ref{thm:def-rings-version}]
The proof of Theorem \ref{Siegel} works verbatim in the commutative setting, giving the following result. Let $K$ be an $\ell$-adic field, $R$ a Tate algebra over $K$, and $F: R\to R$ a continuous endomorphism. Suppose $F$ preserves a maximal ideal $\mathfrak{m}$ of $R$, and acts semisimply on the completion $\widehat{R}$ of $R$ at $\mathfrak{m}$ (i.e.~$F$ acts semisimply on the finite-dimensional $K$-vector spaces $R/\mathfrak{m}^n$ for all $n$). Suppose moreover that the action of $F$ on $\mathfrak{m}/\mathfrak{m}^2$ has eigenvalues satisfyiing $\ell$-Siegel's condition with parameters $c,\mu$ for some $c,\mu>0$. Then there exists an affinoid neighborhood of $[\mathfrak{m}]\in \on{Sp}(R)$ on which $F$ is conjugate to a linear map.

We now choose a Frobenius-stable open ball $U$ containing $[\rho]$ in the rigid generic fiber of $R_{\bar\rho}$; as $R_{\bar\rho}$ is a power series ring over $W(k)$ by the absolute irreducibility of $\bar\rho$, we may choose $U$ to be the spectrum of a Tate algebra $R$. Thus it is enough to check the hypotheses of the result of the previous paragraph, taking $F$ to be the Frobenius automorphism of $R$ and $\mathfrak{m}$ to be the maximal ideal corresponding to $\rho$. The semisimplicity hypothesis follows from the assumption of the semisimplicity of the Frobenius action on $H^1(X_{\bar k}, \rho\otimes\rho^\vee)=(\mathfrak{m}/\mathfrak{m}^2)^\vee$ by an argument identical to the proof of \cite[Theorem 5.1.8]{litt-duke}. And  \cite[Corollaire VII.8]{lafforgue} implies that the eigenvalues of the Frobenius action on $\mathfrak{m}/\mathfrak{m}^2$ are Weil numbers, hence algebraic; thus they satisfy $\ell$-Siegel's condition for some $\mu$ by Proposition \ref{prop:baker}.
\end{proof}
\begin{question}
Let $U_{\bar\rho}$ be as in the statement of Theorem \ref{thm:def-rings-version}. Is $U_{\bar\rho}$ covered by Siegel disks for iterates of Frobenius? That is, for each point $\nu$ of $U_{\bar\rho}$, does there exist a finite extension $k'$ of $k$, a representation $\rho_\nu: \pi_1^{\et}(X_{k'}, \bar x)\to \GL_n(\mathscr{O}_L)$ lifting $\bar\rho$ (with $L$ an $\ell$-adic field with residue field $\mathbb{F}_{\ell^r}$), and a neighborhood $V$ of $[\rho_\nu]$ containing $\nu$, such that the action of the Frobenius of $k'$ on $V$ is conjugate to a linear map?
\end{question}
We view our proof of Theorem \ref{main theorem} as showing that the trivial representation is contained in a ``noncommutative Siegel disk."
\subsection{Other questions}
Theorem \ref{main theorem} is unsatisfying in a number of ways. First, it is natural to ask if the constant $N(X, \ell)$ appearing in the statement of the theorem may be taken to tend to zero as $\ell\to \infty$; as remarked in the introduction, this is known in characteristic zero by \cite[Theorem 1.1.13]{litt-duke}. It is not clear to us if there is a plausible improvement of the $\ell$-adic form of Baker's theorem on linear forms in logarithms which would, by  our method, imply that one may take $N(X,\ell)\to 0$ as $\ell\to\infty$, but the following would imply that one may take $N(X,\ell)$ to be independent of $\ell$:
\begin{question}\label{baker-question}
Let $\lambda_1,\cdots, \lambda_n\in \overline{\mathbb{Q}}$ be a collection of algebraic numbers, stable under the action of $\on{Gal}(\overline{\mathbb{Q}}/\mathbb{Q})$. For each prime $\ell$, the $\ell$-adic form of Baker's theorem yields constants $c_\ell, \mu_\ell$ such that the $\lambda_i$ satisfy $\ell$-Siegel's condition with parameters $c_\ell,\mu_\ell$, as in Definition \ref{def:l-siegel}. Is it true that the constants $c_\ell, \mu_\ell$ may be taken to be  independent of $\ell$?
\end{question}

It is also natural to ask for an analogue of Theorem \ref{main theorem} with better uniformity in e.g.~the genus or gonality of the curve $X$, or one which depends only on the function field $\bar k(X)$. For example, the geometric torsion conjecture \cite{bakker-tsimerman2} predicts that in the case that $\rho$ arises from the $\ell$-adic Tate module of a traceless Abelian scheme $A$, there is a bound on the torsion $$(\rho\otimes \mathbb{Q}_\ell/\mathbb{Z}_\ell)^{\pi_1^{\ell}(X_{\bar k}, \bar x)}$$ depending only on the gonality of $X$ and the rank of $\rho$ (here $(\rho\otimes \mathbb{Q}_\ell/\mathbb{Z}_\ell)^{\pi_1^{\ell}(X_{\bar k}, \bar x)}$ is naturally isomorphic to the rational $\ell$-power torsion of the generic fiber of $A$). A weaker question, plausibly approachable via the methods of this paper, is to ask for a bound on $N$ depending only on the function field $k(X)$ and not on $X$ itself.
\bibliographystyle{alpha}
\bibliography{arithmetic-reps-3}

\end{document}